%
%
%
%
\documentclass[reqno]{amsart}


\usepackage{amsthm}
\usepackage[usenames,dvipsnames]{pstricks}
\usepackage{epsfig}
\usepackage{pst-grad} 
\usepackage{pst-plot} 
\usepackage{graphicx,subfigure}
\usepackage{amsmath,amssymb}

\usepackage{acronym}
\acrodef{BO}{{\sl Benjamin-Ono}}
\acrodef{rBO}{{\sl regularized Benjamin-Ono}}
\acrodef{rILW}{{\sl regularized Intermediate Long Wave}}
\acrodef{DSW}{{\sl Dispersive Shock Wave}}
\acrodef{DSWs}{{\sl Dispersive Shock Waves}}
\acrodef{ILW}{{\sl Intermediate Long Wave}}
\acrodef{CGN}{{\sl Conjugate Gradient-Newton}}
\acrodef{SW/SW}{{\sl Shallow water / Shallow water}}
\acrodef{B/B}{{\sl Boussinesq / Boussinesq}}

\makeatletter
\newcommand{\sech}{\mathop{\operator@font sech}}
\newcommand{\sign}{\mathop{\operator@font sign}}
\makeatother

\newtheorem{lemma}{Lemma}[section]
\newtheorem{theorem}{Theorem}[section]
\newtheorem{proposition}{Proposition}[section]

\newtheorem{remark}{Remark}[section]

\numberwithin{equation}{section}



\begin{document}

\title[]{Solitary-wave solutions of the fractional nonlinear Schr\"{o}dinger equation. I.  Existence and numerical generation}


\author{Angel Dur\'an}
\address{\textbf{A.~Dur\'an:} Applied Mathematics Department, University of Valladolid, P/ Belen 15, 47011, Valladolid, Spain}
\email{angel@mac.uva.es}

\author{Nuria Reguera}
\address{\textbf{N.~Reguera:} Department of Mathematics and Computation, University of Burgos, 09001 Burgos, Spain}
\email{nreguera@ubu.es}



\subjclass[2010]{76B25,35C07,65H10}



\keywords{Fractional nonlinear Schr\"{o}dinger equations,  solitary waves,  Petviashvili iterative method, pseudospectral methods}

\begin{abstract}
The present paper is the first part of a project devoted to the fractional nonlinear Schr\"{o}dinger (fNLS) equation. It is concerned with the existence and numerical generation of the solitary-wave solutions. For the first point, some conserved quantities of the problem are used to search for solitary-wave solutions as relative equilibria. From the relative equilibrium condition, a result of existence via the Concentration-Compactness theory is derived. Several properties of the waves, such as the regularity and the asymptotic decay in some cases, are derived from the existence result. Some other properties, such as the monotone behaviour and the speed-amplitude relation, will be explored computationally. To this end, a numerical procedure for the generation of the profiles is proposed. The method is based on a Fourier pseudospectral approximation of the differential system for the profiles and the use of Petviashvili's iteration with extrapolation.
\end{abstract}

\maketitle
\tableofcontents

\section{Introduction}\label{sec1}
The present paper is the first part of a project concerning the fractional nonlinear Schr\"{o}dinger (fNLS) equation in its one-dimensional version. Considered here are the problems of existence of solitary wave solutions, their numerical generation and some of their mathematical properties. The second part of the project is planned to study the numerical approximation of the corresponding periodic initial-value poblem (ivp) and its application to analyze the dynamics of the solitary wave solutions by computational means, \cite{NRAD2}.

The fNLS equation has the form
\begin{eqnarray}
iu_{t}-\beta (-\Delta)^{s}u-\gamma |u|^{2\sigma}u=0,\quad {\bf x}\in \mathbb{R}^{n}, n\geq 1,\quad t>0.\label{fnls1}
\end{eqnarray} 
In (\ref{fnls1}), $\sigma>0$ determines the nonlinearity (which is cubic when $\sigma=1$) and $\Delta$ denotes the Laplace operator (with $\Delta=\partial_{xx}$ in the one-dimensional case), which here is presented in \lq fractional\rq\ way determined by the parameter $0<s<1$. This term shows the nonlocal character of (\ref{fnls1}), generalizing somehow the classical nonlinear Schr\"{o}dinger equation (NLS) (which would correspond to (\ref{fnls1}) with $s=1$), a classical model for the propagation of weakly nonlinear waves in dispersive media (see e.~g. \cite{SulemS1999}). The fractional Laplacian $(-\Delta)^{s}$ has a Fourier representation of the form
\begin{eqnarray*}
\widehat{(-\Delta)^{s}f}(\xi)=|\xi|^{2s}\widehat{f}(\xi),\quad \xi\in\mathbb{R}^{n},\label{fnls2}
\end{eqnarray*}
where
\begin{eqnarray*}
\widehat{f}(\xi)=\int_{\mathbb{R}^{n}}f({\bf x})e^{-i\xi \cdot {\bf x}}d{\bf x},
\end{eqnarray*}
is the Fourier transform of $f$ with the dot in the integrand standing for the Euclidean inner product in $\mathbb{R}^{n}$. The sign of $\beta\neq 0$ and $\gamma\neq 0$ determines the corresponding focusing and defocusing cases, as in the classical NLS. Following \cite{AblowitzP}, we may write (\ref{fnls1}) in the form
\begin{eqnarray}
\frac{i}{\beta}u_{t}-(-\Delta)^{s}u-\frac{\gamma}{\beta} |u|^{2\sigma}u=0,\quad {\bf x}\in \mathbb{R}^{n}, n\geq 1,\quad t>0,\label{fnls1bb}
\end{eqnarray} 
and the change $t\mapsto \beta t$, in the sense of writing $u(x,t)=v(x,\beta t)$, transforms (\ref{fnls1bb}) into
\begin{eqnarray}
{i}v_{t}-(-\Delta)^{s}v-\widetilde{\gamma} |v|^{2\sigma}v=0,\quad {\bf x}\in \mathbb{R}^{n}, n\geq 1,\quad t>0,\label{fnls1c}
\end{eqnarray} 
where $\widetilde{\gamma}=\frac{\gamma}{\beta}$. With these notations, $\widetilde{\gamma}>0$ is the defocusing case while $\widetilde{\gamma}<0$ is the focusing case. An alternative change $t\mapsto -t$ from (\ref{fnls1c}) leads to
\begin{eqnarray*}
{i}v_{t}+(-\Delta)^{s}v+\widetilde{\gamma} |v|^{2\sigma}v=0,\quad {\bf x}\in \mathbb{R}^{n}, n\geq 1,\quad t>0,
\end{eqnarray*} 
where now  $\widetilde{\gamma}>0$ is the focusing case and $\widetilde{\gamma}<0$ is the defocusing case. 

Equation (\ref{fnls1}) appears in the mathematical modeling of two main areas:
\begin{itemize}
\item It was originally introduced in Quantum Mechanics, \cite{Laskin2000,Laskin2002,Laskin2011,FrohlichJL2007}.
\item For some particular values of the parameters, (\ref{fnls1}) also appears in surface water wave models, \cite{ObrechtS2015,IonescuP2014}.
\end{itemize}
Summarized here are some mathematical properties of (\ref{fnls1}), see \cite{KleinSM2014} for details.
\subsubsection*{Well-posedness} The question of local/global well-posedness of the corresponding initial-value problem (ivp) for (\ref{fnls1}) is analyzed in \cite{ChoHHO2013,ChoHKL2015,ChoHKL2015b,GuoH2013,GuoSWZ2013,HongS2015}. The main results are the following. In \cite{ChoHKL2015}, Cho et al. study local well- and ill-posedness of the one-dimensional version of (\ref{fnls1}) with $\beta=-1, 1/2<s<1, \sigma=1$, in certain Sobolev spaces. These results are extended in \cite{HongS2015} to higher dimensions, $0<s<1, s\neq 1/2$ and $\sigma\geq 1$. For other types of nonlinearites (Hartree-type) well-posedness and blow-up are investigated in \cite{ChoHHO2013,ChoHKL2015b}. Global well-posedness results in the energy space are obtained in \cite{GuoSWZ2013} while blow-up phenomena, in related cases, are also studied in \cite{GuoH2013}.
\subsubsection*{Conserved quantities and symmetries}
The following quantities (mass and energy, resp.) are conserved by smooth and decaying enough solutions of (\ref{fnls1}): 
\begin{eqnarray}
M(u)&=&\int_{\mathbb{R}^{n}} |u({\bf x},t)|^{2} d{\bf x}, \label{fnls3}\\
E(u)&=&\int_{\mathbb{R}^{n}}\left(\frac{\beta}{2} |\nabla^{s}u({\bf x},t)|^{2}+\frac{\gamma}{2\sigma+2} |u({\bf x},t)|^{2\sigma+2}\right) d{\bf x},\label{fnls4}
\end{eqnarray}
where $\nabla^{s}$ is the operator with Fourier symbol
$$\widehat{\nabla^{s}f}(\xi)=(-i|\xi|)^{s}\widehat{f}(\xi),\quad \xi\in\mathbb{R}^{n}.$$
As far as the symmetries of (\ref{fnls1}) are concerned, we have the invariance under the scaling transformation
$$u({\bf x},t)\mapsto u_{\lambda}({\bf x},t)=\lambda^{s/\sigma}u(\lambda{\bf x},\lambda^{2s}t),\quad \lambda>0,
$$ in the sense that if $u$ is a solution of (\ref{fnls1}) then $u_{\lambda}$ is also a solution. Under this scaling, \cite{KleinSM2014}, if
$\dot{H}^{p}$ denotes the homogeneous Sobolev space of order $p$ (this consists of $\sigma$ times weakly differentiable functions $u$ such that $D^{\alpha}u\in L^{2}$ for $|\alpha|=\sigma$, see \cite{Adams1975}) then
\begin{eqnarray*}
||u_{\lambda}||_{\dot{H}^{p}}=||\nabla^{p}u_{\lambda}||_{L^{2}}=\lambda^{n/2-p-s/\sigma}||u||_{\dot{H}^{p}},
\end{eqnarray*}
where $||\cdot ||_{\dot{H}^{p}}$ denotes the corresponding norm in $\dot{H}^{p}$.
The equation (\ref{fnls1}) is called $\dot{H}^{p}$ critical when this scaling leaves the norm invariant, that is
$$\frac{n}{2}-\frac{s}{\sigma}=p,$$ ($p+s/\sigma=1/2$ when $n=1$). For $p=0$ we obtain the $L^{2}$ or mass critical case, whenever the dispersion rate is $s=s^{*}=\sigma n/2$ ($s^{*}=\sigma/2$ in the 1D case). The equation is called mass subcritical (resp. supercritical) when $s<s^{*}$ (resp. $s>s^{*}$). As in the case of the NLS, this can be used to study well-posedness and blow-up, \cite{Lenzmann2007}. Similar critical cases can be defined using the second invariant (\ref{fnls4}). Thus, the energy critical case for $s=p$ holds when the kinetic energy of the solution is a scale invariant quantity of the evolution. This yields another critical index $s_{*}=n\sigma/(2\sigma+2)$, which is equivalent to $\sigma=\sigma_{*}$ where 
$$\sigma_{*}(s,n)=\left\{\begin{matrix} \frac{2s}{n-2s}&0<s<\frac{n}{2}\\
+\infty & s\geq \frac{n}{2}\end{matrix}\right.$$
The study of the energy critical and supercritical cases reveals differences with the NLS equation, see \cite{KleinSM2014}.
\subsubsection*{Special solutions}
Note first that (\ref{fnls1}) admits plane-wave solutions
\begin{eqnarray*}
u({\bf x},t)=A e^{i{(\bf k}\cdot {\bf x}-\omega t)},
\end{eqnarray*}
when the following dispersion relation is satisfied
\begin{eqnarray}
\omega-\beta |{\bf k} |^{2s}-\gamma A^{2\sigma}=0.\label{fnls6}
\end{eqnarray}
The stability and dynamics of these plane wave solutions look to be different from those in the standard NLS (see \cite{DuoZ2016,DuoLZ} and references therein). 

A second group of solutions for the focusing case ($\gamma=-1$) is given by the standing wave solutions
\begin{eqnarray}
u({\bf x},t)=\varphi({\bf x})e^{i\omega t},\quad \omega\in\mathbb{R},\label{fnls7a}
\end{eqnarray}
where
\begin{eqnarray*}
|\varphi |^{2\sigma}\varphi-\beta (-\Delta)^{s}\varphi=\omega \varphi.\label{fnls7b}
\end{eqnarray*}
They can be seen as critical points of the energy subjected to a fixed value of the mass, with Fourier multiplier given by $\omega$. By rescaling $\varphi_{\omega}=\omega^{1/2\sigma}\varphi_{1}\left(\omega^{1/2s}x\right)$ one can assume $\omega=1$ and therefore $\varphi=\varphi_{1}$ solves
\begin{eqnarray}
|\varphi |^{2\sigma}\varphi=\beta (-\Delta)^{s}\varphi+ \varphi.\label{fnls7c}
\end{eqnarray}
Solution $\varphi\in H^{s}(\mathbb{R}^{n})\bigcap L^{2\sigma+2}(\mathbb{R}^{n})$ of (\ref{fnls7c}) are known to exist for $0<\sigma<\sigma_{*}$, while for $\sigma\geq \sigma_{*}$ (\ref{fnls7c}) does not admit any non trivial solution in $H^{s}(\mathbb{R}^{n})\cap L^{2\sigma+2}(\mathbb{R}^{n})$, \cite{KleinSM2014}. The so-called ground states are solutions $Q$ of (\ref{fnls7c}) with minimal energy, they are known to be real, radially symmetric and satisfy
\begin{eqnarray}
\beta (-\Delta)^{s}Q+ Q=Q^{2\sigma+1}.\label{fnls7d}
\end{eqnarray}
In addition, they decay algebraically, as $|x|\rightarrow\infty$, like $|{\bf x}|^{-(n+2s)}$ (cf. the case of the NLS, whose ground state solutions decay exponentially) and no explicit form for $Q$ is known. A technique to contruct numerically fractional ground states is described in \cite{KleinSM2014} and applied to study their stability by  computational means. Additional aspects of existence and orbital stability of these waves are studied, among others, in \cite{ChoHHO2014,FrankL,GuoH}.

In \cite{HongS2017}, Hong and Sire construct, for the one-dimensional, cubic case, a class of traveling \lq soliton\rq\ solutions of the form
\begin{eqnarray}
u(x,t)=e^{-it(|k|^{2s}-\omega^{2s})}Q_{\omega,k}(x-2ts|k|^{2s-2}k),\label{fnls8a}
\end{eqnarray}
with speed $c=2k\in \mathbb{R}, s\in (1/2,1)$ and $\omega>0$,  by using variational theory. The profile $Q=Q_{\omega,k}$ is a solution of
\begin{eqnarray*}
P_{k}Q+\omega^{2s}Q-|Q|^{2}Q=0,\label{fnls8b}
\end{eqnarray*}
where $P_{k}$ is the linear operator with Fourier representation
\begin{eqnarray*}
\widehat{P_{k}v}(\xi)=\left(|\xi+k|^{2s}-|k|^{2s}+2sk|k|^{2s-2}\xi\right)\widehat{v}(\xi),\quad \xi\in \mathbb{R}.
\end{eqnarray*}
The profile $Q=Q_{\omega,k}$ is obtained as a minimizer of certain functional and actually $Q=Q_{\omega,k}\in C^{\infty}(\mathbb{R})$. See also \cite{ABS1997} for a possible, alternative derivation. 

From the point of view of the numerical approximation, the nonlocal character of (\ref{fnls1}) implies the natural choice, made by some authors, of Fourier spectral methods to discretize in space the corresponding periodic initial-value problem. Thus, Kirkpatrick and Zhang, \cite{KirkpatrickZ2016}, base on these methods their numerical scheme to analyze, by computational means, some phenomena of decomposition of the coherent structure of standing wave solutions of equations of fNLS type but with a potential, as well as the possible turbulence formation. For the time integration, the authors propose  a second-order splitting method (which is, by the way, a typical choice as well). On the other hand, Klein et al., \cite{KleinSM2014}, also construct a numerical scheme with pseudospectral discretization to study computationally some aspects of the dynamics of (\ref{fnls1}): blow-up and its properties, stability of standing wave solutions (generated numerically with Newton-Krylov methods) and the dynamics of solutions at moderate times. In this case, the time integration is performed with different fourth-order methods of linearly implicit Runge-Kutta type for the focusing case and of splitting type for the defocusing case, \cite{Driscoll2002,Klein2008}. Also, Duo and Zhang, \cite{DuoZ2016}, construct three schemes with Fourier spectral spatial discretization, along with three time integrators of split-step, Crank-Nicolson and relaxed type. The three methods preserve the corresponding discrete version of the mass quantity (\ref{fnls3}) and time reversibility. Some differences appear in the preservation of the energy (\ref{fnls4}) and the dispersion relation (\ref{fnls6}). Their efficiency (in particular, the second order of convergence) is illustrated numerically. Additionally, Antoine et al., \cite{AntoineTZ2016}, introduce several numerical strategies to generate approximations to standing waves and to the dynamics of relate equations of fNLS type, which include a rotational term and nonlocal interactions. In the case of the dynamics, after reformulating the equation to rule out the presence of the rotational term, a time splitting, pseudospectral scheme is applied. Finally, as an alternative to the spectral approach, Wang and Huang, \cite{WangH2015}, introduce a finite difference (FD) scheme of Crank-Nicolson type which preserves the discrete mass and energy. The authors prove the existence of numerical solution and the convergence of the method under suitable conditions on time and space stepsizes. (See also \cite{WangXY2014} for a FD discretization of a coupled system of FNLS equations, with Dirichlet boundary conditions.) The use of finite element methods, for the more general fractional Ginzburg-Landau equation, is treated in \cite{LiHW2017}.

Considered here is the one-dimensional version of (\ref{fnls1}) with $\beta>0$ and $\gamma<0$. For simplicity, $\beta=1, \gamma=-1$ will be taken, so that (\ref{fnls1}) for $n=1$ takes the form
\begin{eqnarray}
iu_{t}- (-\partial_{xx})^{s}u+ |u|^{2\sigma}u=0,\quad {x}\in \mathbb{R},\quad t>0.\label{fnls1d}
\end{eqnarray} 
In (\ref{fnls1d}), $\sigma>0, 0<s<1$. We are interested in several features of (\ref{fnls1d}) and the present paper is focused on the existence of solitary wave solutions. The main contributions, summarized here are the following:
%
%
%
\begin{itemize}
\item We adapt the interpretation of solitary wave solutions of the classical NLS considered in \cite{DuranS2000} , in terms of symmetry groups and relative equilibria (RE). For the fractional case (\ref{fnls1d}), we first derive a new conserved quantity (momentum) of the ivp and study those equilibria of the energy restricted to fixed values of the mass and momentum. A new family of solitary waves is then obtained from the application of a suitable symmetry group determined by the mass and momentum quantities. The existence and regularity are derived from the application of the Concentration-Compactness theory, \cite{Lions}. It may be worth pointing out some remarks here. The derivation of the waves is valid for $s\in (1/2,1]$, including the classical NLS ($s=1$). In that case, the well known soliton-type solutions, \cite{DuranS2000}, are obtained as a subfamily of the waves derived here, from a particular choice of the phase. The same subfamily for the fractional case $s\in (1/2,1)$ corresponds to the waves introduced in \cite{HongS2017}. 

As the (complex) waves are considered as relative equilibria with respect to two quantities, the conditional variational problem involves two Lagrange multipliers. They respectively determine phase and speed of the complex profiles. For a fixed value of the phase, there is a limiting speed for which the existence of solitary waves is ensured for velocities within the range limited by this speed of sound. 

\item In addition, most of the information about the structure and properties of these solitary wave solutions is obtained here from a numerical study of generation of approximate profiles. 
In this sense, Klein et al., \cite{KleinSM2014}, generate numerically fractional ground states (see also references therein) by approximating the profile $Q$ in (\ref{fnls7d}) with discrete Fourier series and solving the resulting algebraic equations for the discrete coefficients with Newton type iteration methods. The first point under study is then searching for efficient numerical approximations to the solitary wave profiles by using discrete Fourier techniques as well, but with different iterative techniques. Specifically, our proposal consists of using methods of Petviashvili type combined with acceleration techniques based on extrapolations. This has been successfully applied in other cases, e.~g. \cite{AlvarezD2014,AlvarezD2016}, and its performance here will serve us to study several features of the waves: asymptotic decay, speed-amplitude relation, even character, and monotone decay. 

\end{itemize}
The structure of the paper is as follows. Section \ref{sec2} is devoted to the existence of solitary waves. The formulation as relative equilibrium solutions requires the introduction of some ideas on symmetry groups, conserved quantities and the relative equilibrium condition. This is written in a proper way to prove the existence via the Concentration Compactness theory. In section  \ref{sec3} some additional properties of the waves, mentioned above, are investigated. For a particular choice of the phase function, some theoretical results can be proved. For the general case, the numerical procedure for the generation of approximate profiles is first introduced and validated, and then used to develop the corresponding computational study. Some concluding remarks are made in section \ref{sec4}.

The following notation will be used throughout the paper. The $L^{2}$-inner product will be denoted by $(\cdot,\cdot)$. 
For $v_{j}, w_{j}\in L^{2}, j=1,2$, the inner product in $L^{2}\times L^{2}$ is
\begin{eqnarray}
\langle \begin{pmatrix}v_{1}\\w_{1}\end{pmatrix},\begin{pmatrix}v_{2}\\w_{2}\end{pmatrix}\rangle=\int_{\mathbb{R}}(v_{1}v_{2}+w_{1}w_{2})dx,\label{fnls_234b}
\end{eqnarray}
On the other hand, for the sake of simplicity and since no confusion is possible, the norm of the $L^{p}=L^{p}(\mathbb{R})$ space and of the product $L^{p}\times L^{p}$ will be denoted by $||\cdot||_{L^{p}}$. In a similar way, for $s\geq 0$, $||\cdot||_{s}$ will stand for the norm of the $L^{2}$-based Sobolev space $H^{s}=H^{s}(\mathbb{R})$ and of the product space $H^{s}\times H^{s}$.

For a linear operator $\mathcal{L}$, the Calderon commutator $[\mathcal{L},\cdot]\cdot$ is defined as 
\begin{eqnarray}
[\mathcal{L},f]g=\mathcal{L}(fg)-f\mathcal{L}g.\label{Calc}
\end{eqnarray}
Throughout the paper, $C$ will denote a positive constant that may depend on fixed parameters.
\section{Existence of solitary waves}\label{sec2}
In this section the existence of solitary wave solutions of (\ref{fnls1d}) is analyzed. Note first that we can alternative write (\ref{fnls1d}) as a system
\begin{eqnarray}
v_{t}-(-\partial_{xx})^{s}w+(v^{2}+w^{2})^{\sigma}w&=&0,\nonumber\\
-w_{t}-(-\partial_{xx})^{s}v+(v^{2}+w^{2})^{\sigma}v&=&0.\label{fnls1b}
\end{eqnarray}
for $u=v+iw$.

\subsection{Symmetry groups and conserved quantities}
\label{sec21}
\begin{lemma}
\label{lemmaNR1}
 The following quantities (mass, momentum, energy) are conserved by smooth and decaying enough solutions of  (\ref{fnls1d}): 
\begin{eqnarray}
I_{1}(v,w)&=&\frac{1}{2}\int_{\mathbb{R}}(v^{2}+w^{2})dx=\frac{1}{2}\int_{\mathbb{R}}|u|^{2}dx, \label{fnls3a}\\
I_{2}(v,w)&=&\frac{1}{2}\int_{\mathbb{R}}(vw_{x}-wv_{x})dx=\frac{1}{2}\int_{\mathbb{R}}{\rm Im}(u\overline{u}_{x})dx, \label{fnls3b}\\
H(v,w)&=&\int_{\mathbb{R}}\left(\frac{1}{2}\left( (|D|^{s}v)^{2}+(|D|^{s}w)^{2}\right)-\frac{1}{2\sigma+2} (v^{2}+w^{2})^{\sigma+1}\right) d{x},\label{fnls3c}
\end{eqnarray}
where $u=v+iw$ and $|D|^{s}$ is defined as
$$\widehat{|D|^{s}f}(\xi)=|\xi|^{s}\, \widehat{f}(\xi),\quad \xi\in\mathbb{R}.$$
\end{lemma}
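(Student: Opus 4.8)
The plan is to verify directly that $\frac{d}{dt}I_j=0$ and $\frac{d}{dt}H=0$ along smooth, sufficiently decaying solutions of the system (\ref{fnls1b}), by differentiating under the integral sign, substituting the time derivatives from the equations, and integrating by parts, discarding all boundary terms at $\pm\infty$ by the decay hypothesis. For $I_1$, I would compute
\begin{eqnarray*}
\frac{d}{dt}I_1(v,w)=\int_{\mathbb{R}}(vv_t+ww_t)\,dx
=\int_{\mathbb{R}}\Big(v\,(-\partial_{xx})^{s}w-v(v^2+w^2)^{\sigma}w - w\,(-\partial_{xx})^{s}v + w(v^2+w^2)^{\sigma}v\Big)dx,
\end{eqnarray*}
and then observe that the nonlinear terms cancel pointwise, while $\int v\,(-\partial_{xx})^{s}w\,dx=\int w\,(-\partial_{xx})^{s}v\,dx$ since $(-\partial_{xx})^{s}$ is self-adjoint (its Fourier symbol $|\xi|^{2s}$ is real and even); hence the remaining two terms cancel as well. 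Equivalently, in the complex form $iu_t=(-\partial_{xx})^{s}u-|u|^{2\sigma}u$, one multiplies by $\overline{u}$, integrates, and takes imaginary parts: $\mathrm{Re}\!\int \overline{u}\,u_t\,dx=\tfrac12\tfrac{d}{dt}\!\int|u|^2dx$, and the right side is real since both $(-\partial_{xx})^{s}$ is self-adjoint and $|u|^{2\sigma}u\cdot\overline{u}=|u|^{2\sigma+2}$ is real.

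For $H$, I would differentiate to get
\begin{eqnarray*}
\frac{d}{dt}H(v,w)=\int_{\mathbb{R}}\Big((-\partial_{xx})^{s}v\cdot v_t + (-\partial_{xx})^{s}w\cdot w_t - (v^2+w^2)^{\sigma}(vv_t+ww_t)\Big)dx,
\end{eqnarray*}
using self-adjointness of $(-\partial_{xx})^{s}$ to move the operator onto the time derivatives in the kinetic term (so that $\int |D|^s v\,|D|^s v_t\,dx=\int (-\partial_{xx})^s v\cdot v_t\,dx$). Grouping, this is $\int \big((-\partial_{xx})^s v - (v^2+w^2)^\sigma v\big)v_t\,dx + \int\big((-\partial_{xx})^s w - (v^2+w^2)^\sigma w\big)w_t\,dx$; but from (\ref{fnls1b}), the first bracket equals $w_t$ and the second equals $-v_t$, so the integrand becomes $w_t v_t - v_t w_t = 0$. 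In complex notation this is the standard observation that the flow is Hamiltonian: $u_t = -i\,\delta H/\delta\overline{u}$, so $\tfrac{d}{dt}H=2\mathrm{Re}\langle \delta H/\delta\overline u, u_t\rangle=2\mathrm{Re}\langle -i u_t, u_t\rangle=0$.

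For $I_2$, differentiating gives $\tfrac{d}{dt}I_2=\tfrac12\int (v_t w_x + v w_{xt} - w_t v_x - w v_{xt})\,dx$; after one integration by parts in the mixed-derivative terms (legitimate by decay), this collapses to $\int(v_t w_x - w_t v_x)\,dx$. Substituting $v_t=(-\partial_{xx})^s w-(v^2+w^2)^\sigma w$ and $w_t=(-\partial_{xx})^s v-(v^2+w^2)^\sigma v$, the nonlinear contribution is $\int\big(-(v^2+w^2)^\sigma w\,w_x - (-(v^2+w^2)^\sigma v\,v_x)\big)dx=-\tfrac12\int\partial_x\big(\tfrac{1}{\sigma+1}(v^2+w^2)^{\sigma+1}\big)... $ wait — more precisely it is $-\int (v^2+w^2)^{\sigma}(w w_x - v v_x)\,dx$, which is $-\tfrac{1}{2}\int (v^2+w^2)^\sigma \partial_x(w^2-v^2)\,dx$; this is not an exact derivative in general, so the cleaner route is the complex one: $I_2=\tfrac12\,\mathrm{Im}\!\int u\overline{u}_x\,dx = -\tfrac12\,\mathrm{Im}\!\int \overline{u}\,u_x\,dx$, and using $iu_t=(-\partial_{xx})^s u - |u|^{2\sigma}u$ one finds $\tfrac{d}{dt}I_2 = -\mathrm{Re}\!\int \overline{u}_x\big((-\partial_{xx})^s u - |u|^{2\sigma}u\big)dx$; the nonlinear term $\mathrm{Re}\!\int \overline{u}_x|u|^{2\sigma}u\,dx = \tfrac12\int |u|^{2\sigma}\partial_x|u|^2\,dx=\tfrac{1}{2\sigma+2}\int\partial_x|u|^{2\sigma+2}dx=0$ by decay, and the linear term $\mathrm{Re}\!\int\overline{u}_x(-\partial_{xx})^s u\,dx$ vanishes because the operator $\partial_x$ is skew-adjoint while $(-\partial_{xx})^s$ is self-adjoint and they commute (their symbols $i\xi$ and $|\xi|^{2s}$ multiply to the purely imaginary $i\xi|\xi|^{2s}$, so the integral is purely imaginary).

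\textbf{Main obstacle.} There is no deep difficulty; the only point demanding care is the rigorous justification of the manipulations involving the nonlocal operator $(-\partial_{xx})^{s}$ — namely that it is self-adjoint on the relevant class, that integration by parts against it produces no boundary contribution, and that it commutes with $\partial_x$ — together with ensuring the "smooth and decaying enough" hypothesis genuinely licenses differentiating under the integral and dropping all boundary terms. All of this is handled cleanly by passing to the Fourier side (Plancherel), where $(-\partial_{xx})^{s}$ becomes multiplication by the real even symbol $|\xi|^{2s}$, and this is the route I would write up.
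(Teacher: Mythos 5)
Your argument is correct and, for the momentum $I_{2}$ (the only invariant the paper proves in detail --- $I_{1}$ and $H$ are simply quoted as the 1D versions of (\ref{fnls3}) and (\ref{fnls4})), it follows essentially the same route as the paper: the nonlinear contribution reduces to an exact derivative that vanishes by decay, and the dispersive contribution vanishes because on the Fourier side it reads $\int i\xi|\xi|^{2s}|\hat{u}(\xi)|^{2}\,d\xi$, which is purely imaginary while the quantity itself is real. One small remark: your real-variable detour for $I_{2}$ stalled only because of a sign slip in solving (\ref{fnls1b}) for $w_{t}$ (it should be $w_{t}=-(-\partial_{xx})^{s}v+(v^{2}+w^{2})^{\sigma}v$); with the correct sign the nonlinear term becomes $-\tfrac{1}{2\sigma+2}\int_{\mathbb{R}}\partial_{x}\bigl((v^{2}+w^{2})^{\sigma+1}\bigr)\,dx$, precisely the exact-derivative term the paper calls $I_{22}$, so that route works as well and your switch to the complex formulation, while perfectly valid, was not forced.
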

\begin{proof}
The quantities (\ref{fnls3a}) and (\ref{fnls3c}) are the 1D versions of (\ref{fnls3}) and (\ref{fnls4}) respectively. As far as (\ref{fnls3b}) is concerned, if $u=v+iw$ is a solution of (\ref{fnls1b}), we have
\begin{eqnarray*}
\frac{d}{dt}I_{2}(v,w)&=&I_{21}+I_{22},\\
I_{21}&=&\frac{1}{2}\int_{\mathbb{R}}\left(w_{x}(-\partial_{xx})^{s}w-w(-\partial_{xx})^{s}w_{x}+
v_{x}(-\partial_{xx})^{s}v-v(-\partial_{xx})^{s}v_{x}\right)dx,\\
I_{22}&=&\frac{1}{2}\int_{\mathbb{R}}\left(-ww_{x}(v^{2}+w^{2})^{\sigma}+w\partial_{x}\left(w(v^{2}+w^{2})^{\sigma}\right)\right.\\
&&\left.-vv_{x}(v^{2}+w^{2})^{\sigma}+v\partial_{x}\left(v(v^{2}+w^{2})^{\sigma}\right)\right)dx.
\end{eqnarray*}
The second integral can be written as
\begin{eqnarray*}
I_{22}=\frac{1}{2}\int_{\mathbb{R}}(v^{2}+w^{2})\partial_{x}(v^{2}+w^{2})^{\sigma}dx=\frac{\sigma}{2\sigma+2}\int_{\mathbb{R}}\partial_{x}\left({(v^{2}+w^{2})^{\sigma+1}}\right)dx,
\end{eqnarray*}
which vanishes if $v,w\rightarrow 0$ as $x\rightarrow\pm\infty$. On the other hand, we can make use of Plancherel's identity to obtain
\begin{eqnarray*}
I_{21}&=&\int_{\mathbb{R}}\left((i\xi)\widehat{w}(\xi)|\xi|^{2s}\overline{\widehat{w}(\xi)}+
(i\xi)\widehat{v}(\xi)|\xi|^{2s}\overline{\widehat{v}(\xi)}\right)dx\\
&=&\int_{\mathbb{R}}\left((i\xi)|\xi|^{2s}|\widehat{w}(\xi)|^{2}+
(i\xi)|\xi|^{2s}|\widehat{v}(\xi)|^{2}\right)dx=0.
\end{eqnarray*}
\end{proof}
We note that the new quantity (\ref{fnls3b}) will be also called momentum, in accordance with the case of the classical NLS.
The r\^{o}le of the three invariants (\ref{fnls3a})-(\ref{fnls3c})  in the generation of solitary-wave solutions of (\ref{fnls1}) is two-fold. The first one is concerned with the symmetry groups of (\ref{fnls1}), \cite{Olver}, in the classical sense of groups of transformations taking solutions into solutions. The following result (whose proof is direct) clarifies this point.
\begin{lemma}
\label{lemmaNR2}
 Let $(v,w)\mapsto G_{(\alpha,\beta)}(v,w), \alpha,\beta\in\mathbb{R}$ the transformation defined as
\begin{eqnarray}
G_{(\alpha,\beta)}(v,w)(x)=\begin{pmatrix} \cos\alpha&-\sin\alpha\\\sin\alpha&\cos\alpha\end{pmatrix}\begin{pmatrix}v(x-\beta)\\w(x-\beta)\end{pmatrix}.\label{symg}
\end{eqnarray}
Let $(v(x,t),w(x,t))$ be a solution of (\ref{fnls1b}). Then $(\widetilde{v},\widetilde{w})= G_{(\alpha,\beta)}(v,w)$ is a solution of (\ref{fnls1b}).
\end{lemma}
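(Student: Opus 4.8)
The plan is to verify directly that the pair $(\widetilde v,\widetilde w)=G_{(\alpha,\beta)}(v,w)$ satisfies the system (\ref{fnls1b}), exploiting the fact that (\ref{fnls1b}) is built out of three ingredients, each of which interacts nicely with $G_{(\alpha,\beta)}$: the time derivative, the fractional Laplacian $(-\partial_{xx})^{s}$, and the nonlinearity $(v^{2}+w^{2})^{\sigma}(\cdot)$. Since $G_{(\alpha,\beta)}$ is a composition of a constant rotation $R_{\alpha}=\begin{pmatrix}\cos\alpha&-\sin\alpha\\\sin\alpha&\cos\alpha\end{pmatrix}$ in the $(v,w)$ plane and a fixed spatial translation $\tau_{\beta}\colon x\mapsto x-\beta$, and these two operations commute with each other, it suffices to check invariance under each factor separately.

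First I would treat the spatial translation $\tau_{\beta}$. Writing the system (\ref{fnls1b}) compactly as $J\partial_{t}U=(-\partial_{xx})^{s}U-f(|U|^{2})U$ with $U=\binom{v}{w}$ and $J=\binom{0\ -1}{1\ \ 0}$, I observe that $\partial_{t}$ and $(-\partial_{xx})^{s}$ are both translation-invariant (the latter because its Fourier symbol $|\xi|^{2s}$ does not depend on $x$, so translation only multiplies $\widehat U$ by the phase $e^{-i\xi\beta}$ which commutes with multiplication by $|\xi|^{2s}$), and the nonlinear term is a pointwise (local, autonomous) operation, hence also translation-invariant. Therefore $U(\cdot-\beta,t)$ solves (\ref{fnls1b}) whenever $U$ does. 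Second, for the rotation $R_{\alpha}$: all three terms of the system are \emph{equivariant} under $R_{\alpha}$. The time derivative and the fractional Laplacian act componentwise and hence commute with the constant matrix $R_{\alpha}$; for the nonlinearity the key point is that $R_{\alpha}$ is orthogonal, so $|R_{\alpha}U|^{2}=v^{2}+w^{2}=|U|^{2}$ is unchanged, and thus $f(|R_{\alpha}U|^{2})R_{\alpha}U=R_{\alpha}\bigl(f(|U|^{2})U\bigr)$. Finally $R_{\alpha}$ commutes with $J$ — this is exactly the statement that rotations in the plane commute, $R_{\alpha}J=JR_{\alpha}$, since $J=R_{\pi/2}$. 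Multiplying the system satisfied by $U$ on the left by $R_{\alpha}$ and moving $R_{\alpha}$ through $J$, $\partial_{t}$, $(-\partial_{xx})^{s}$ and the nonlinearity then shows that $R_{\alpha}U$ solves (\ref{fnls1b}).

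Combining the two steps, $G_{(\alpha,\beta)}(v,w)=R_{\alpha}\,U(\cdot-\beta,t)$ is a solution, which is the assertion. There is essentially no obstacle here: the computation is routine linear algebra together with the elementary observation that the Fourier multiplier $|\xi|^{2s}$ is $x$-independent; the only point worth stating carefully is the commutation $R_{\alpha}J=JR_{\alpha}$, which is what makes the symplectic structure compatible with the phase rotation. Indeed this is why the statement says the proof is direct, and one may simply record the two equivariance facts and the commutation and conclude.
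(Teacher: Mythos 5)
Your proof is correct and is exactly the direct verification the paper has in mind (the paper omits the proof, stating only that it ``is direct''): the decomposition into a translation, which commutes with $\partial_t$, the Fourier multiplier $(-\partial_{xx})^{s}$ and the pointwise nonlinearity, and a rotation $R_\alpha$, which commutes with $J$ and leaves $v^2+w^2$ invariant, is the standard and essentially unique way to check the claim. Nothing is missing.
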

As it is well known, \cite{Olver}, the relation between (\ref{symg}) and (\ref{fnls3a}), (\ref{fnls3b}) is given by the property that the invariants determine the infinitesimal generators of the symmetry group, in the sense that for all $x\in\mathbb{R}$
\begin{eqnarray*}
\frac{d}{d\alpha}\Big|_{\alpha=0}G_{(\alpha,0)}(v,w)(x)&=&\begin{pmatrix}0&1\\-1&0\end{pmatrix}\delta I_{1}(v,w)(x),\\
\frac{d}{d\beta}\Big|_{\beta=0}G_{(0,\beta)}(v,w)(x)&=&\begin{pmatrix}0&1\\-1&0\end{pmatrix}\delta I_{2}(v,w)(x),
\end{eqnarray*}
where $\delta I$ denotes variational (Fr\'echet) derivative of $I$
$$\delta I(v,w)=\left(\frac{\delta I}{\delta v},\frac{\delta I}{\delta w}\right)^{T}.$$

\subsection{Relative equilibria}
\label{sec22}
We note that lemmas \ref{lemmaNR1} and \ref{lemmaNR2} also hold in the classical NLS equation (limiting case $s=1$ in (\ref{fnls1})), \cite{DuranS2000}. Motivated by this, we may go one step further and study the existence of relative equilibrium solutions of (\ref{fnls1}). This means that we look for profiles $u_{0}=v_{0}+iw_{0}$ which are critical points of the Hamiltonian at fixed values of the mass and momentum quantities
\begin{eqnarray*}
\delta\left(H(u_{0})-\lambda_{0}^{1}I_{1}(u_{0})-\lambda_{0}^{2}I_{2}(u_{0})\right)&=&0,\\
I_{i}(u_{0})&=&c_{i},i=1,2,
\end{eqnarray*}
for real $\lambda_{0}^{j}, j=1,2$ and some $c_{i}, i=1,2$.
The relative equilibrium (RE) equation will take the form
\begin{eqnarray}
-(-\partial_{xx})^{s}u_{0}+|u_{0}|^{2\sigma}u_{0}-\lambda_{0}^{1}u_{0}-i\lambda_{0}^{2}\partial_{x}u_{0}=0,\label{fnls22_1}
\end{eqnarray}
which can be written as a real system
\begin{eqnarray}
-(-\partial_{xx})^{s}v_{0}-\lambda_{0}^{1}v_{0}+\lambda_{0}^{2}w_{0}'+(v_{0}^{2}+w_{0}^{2})^{\sigma}v_{0}&=&0,\nonumber\\
-(-\partial_{xx})^{s}w_{0}-\lambda_{0}^{1}w_{0}-\lambda_{0}^{2}v_{0}'+(v_{0}^{2}+w_{0}^{2})^{\sigma}w_{0}&=&0.\label{fnls22_2}
\end{eqnarray}
The existence of solutions $u_{0}=(v_{0},w_{0})$ of (\ref{fnls22_2}) will be analyzed in section \ref{sec23} for $s\in (1/2,1]$ and $\sigma>0$. 
If we write $u_{0}(x)=e^{i\theta(x)}\rho(x)$ with real $\rho$, and from
$$\varphi(x,x_{0},\theta_{0})=G_{(\theta_{0},x_{0})}(u_{0})=\rho(x-x_{0})e^{i\theta(x-x_{0})+i\theta_{0}},$$ which represents the elements of the {\em orbit} through $u_{0}=(v_{0},w_{0})$ by the symmetry group (\ref{symg}), 
a two-parameter family of solitary-wave solutions of (\ref{fnls1}) will take the form 
\begin{eqnarray}
\psi(x,t,a,c,x_{0},\theta_{0})&=&G_{(t\lambda_{0}^{1},t\lambda_{0}^{2})}(\varphi)\nonumber\\
&=&\rho(x-t\lambda_{0}^{2}-x_{0})e^{i(\theta(x-t\lambda_{0}^{2}-x_{0})+\theta_{0}+\lambda_{0}^{1}t)}.\label{fnls22_7}
\end{eqnarray}
A particular subfamily of (\ref{fnls22_7}) can be emphasized. Taking 
$\theta(x)=Ax$ for constant $A$, then it holds that $\rho$ satisfies
\begin{eqnarray}
R_{A}\rho=\rho^{2\sigma+1},\label{fnls22_3}
\end{eqnarray}
where $R_{A}$ is the Fourier multiplier operator with Fourier symbol
\begin{eqnarray}
\widehat{R_{A}f}(\xi)&=&r(\xi)\widehat{f}(\xi),\; r(\xi)=|\xi+A|^{2s}-\lambda_{0}^{2}\xi+\underbrace{\lambda_{0}^{1}-A\lambda_{0}^{2}}_{B},\quad \xi\in\mathbb{R}.\label{fnls22_4}
\end{eqnarray}
Formula (\ref{fnls22_4}) extends the arguments made in  \cite{DuranS2000} for the classical NLS to the fractional case: when $s=1$, the Fourier symbol is
\begin{eqnarray}
r(\xi)=\xi^{2}+(2A-\lambda_{0}^{2})\xi+\lambda_{0}^{1}+A^{2}-\lambda_{0}^{2}A,\quad \xi\in\mathbb{R},\label{fnls22_4a}
\end{eqnarray} 
and the soliton solutions of the classical NLS can be obtained from those relative RE solutions corresponding to taking $A=\lambda_{0}^{2}/2$ and then, cf. \cite{DuranS2000}
$$r(\xi)=\xi^{2}+a,\quad a=\lambda_{0}^{1}-\frac{(\lambda_{0}^{2})^{2}}{4}.$$
Our main difference here is the presence of a nonlocal operator in the equation for $\rho$. We write (\ref{fnls22_3}) in the form
\begin{eqnarray}
(M+a)\rho=\rho^{2\sigma+1},\label{fnls22_5}
\end{eqnarray}
with $a=B+|A|^{2s}$, $M$ with Fourier symbol
\begin{eqnarray}
m(\xi)=|\xi+A|^{2s}-\lambda_{0}^{2}\xi-|A|^{2s},\quad \xi\in\mathbb{R},\quad 0<s<1.\label{fnls22_6}
\end{eqnarray}
The following properties of $m$ are proved in \cite{HongS2017}.
\begin{lemma}
\label{lemmaNR3}
The symbol $m$ in (\ref{fnls22_6}) satisfies:
\begin{enumerate}
\item $m(0)=0$.
\item $m'(0)=0\Leftrightarrow \lambda_{0}^{2}=2s|A|^{2s-2}A$.
\item $m''(\xi)>0$ if $s>1/2$.
\end{enumerate}
\end{lemma}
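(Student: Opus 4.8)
The plan is to verify each of the three assertions in Lemma \ref{lemmaNR3} directly from the explicit formula \eqref{fnls22_6} for the symbol $m(\xi)=|\xi+A|^{2s}-\lambda_{0}^{2}\xi-|A|^{2s}$, treating $A$ as a fixed real parameter and $s\in(0,1)$ (with the extra hypothesis $s>1/2$ for item (3)). The first claim is immediate: setting $\xi=0$ gives $m(0)=|A|^{2s}-0-|A|^{2s}=0$, with no case distinction needed.

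For item (2), I would differentiate $m$ for $\xi\neq -A$. Writing $|\xi+A|^{2s}=\bigl((\xi+A)^{2}\bigr)^{s}$, the chain rule gives $\frac{d}{d\xi}|\xi+A|^{2s}=2s\,(\xi+A)\,|\xi+A|^{2s-2}=2s\,|\xi+A|^{2s-2}(\xi+A)$, so that $m'(\xi)=2s|\xi+A|^{2s-2}(\xi+A)-\lambda_{0}^{2}$. Evaluating at $\xi=0$ (which is legitimate provided $A\neq0$; the case $A=0$ should be noted as degenerate, consistent with $m'(0)=0\Leftrightarrow\lambda_{0}^{2}=0$ there) yields $m'(0)=2s|A|^{2s-2}A-\lambda_{0}^{2}$, and hence $m'(0)=0$ if and only if $\lambda_{0}^{2}=2s|A|^{2s-2}A$, which is exactly the stated equivalence.

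For item (3), I would compute the second derivative away from $\xi=-A$. Differentiating $m'(\xi)=2s|\xi+A|^{2s-2}(\xi+A)-\lambda_{0}^{2}$ and using $\frac{d}{d\xi}\bigl(|\xi+A|^{2s-2}(\xi+A)\bigr)=(2s-1)|\xi+A|^{2s-2}$, one gets $m''(\xi)=2s(2s-1)|\xi+A|^{2s-2}$. Since $s>1/2$ forces $2s-1>0$ and $2s>0$, and $|\xi+A|^{2s-2}>0$ for all $\xi\neq -A$, we conclude $m''(\xi)>0$ on $\mathbb{R}\setminus\{-A\}$. (If one insists on a statement valid at $\xi=-A$ as well, note that when $2s-2<0$ the quantity $|\xi+A|^{2s-2}$ blows up to $+\infty$ there, so the convexity interpretation still goes through in the distributional sense; but the natural reading of the lemma is the pointwise bound for $\xi\neq -A$, and for $s>1/2$ with $2s-2\geq0$, i.e.\ $s\geq1$, there is no singularity at all.)

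The computations are entirely routine; there is no genuine obstacle, only two bookkeeping points to handle cleanly. The first is the non-smoothness of $\xi\mapsto|\xi+A|^{2s}$ at $\xi=-A$ when $2s<2$: one must either exclude that single point or argue in a weak sense, and one should make clear it does not affect $\xi=0$ as long as $A\neq 0$. The second is the degenerate case $A=0$, where $m(\xi)=|\xi|^{2s}-\lambda_{0}^{2}\xi$ is itself singular at the origin for $s<1/2$; since item (3) already requires $s>1/2$ this is harmless there, and for items (1)--(2) one simply records that the formulas reduce correctly. Beyond flagging these, the proof is a one-line differentiation each.
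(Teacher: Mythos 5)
Your computations are correct: $m(0)=0$ is immediate, $m'(\xi)=2s|\xi+A|^{2s-2}(\xi+A)-\lambda_{0}^{2}$ gives the stated equivalence at $\xi=0$, and $m''(\xi)=2s(2s-1)|\xi+A|^{2s-2}>0$ for $s>1/2$ away from $\xi=-A$. The paper does not actually prove this lemma---it simply cites \cite{HongS2017}---so there is no in-paper argument to compare against; your direct differentiation, together with the (appropriate) flags on the singular point $\xi=-A$ and the degenerate case $A=0$, is the standard and complete verification.
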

As mentioned in \cite{HongS2017}, lemma \ref{lemmaNR3} implies that
\begin{itemize}
\item $m(\xi)=|\xi|^{2s}+O(|\xi|^{2s-1}),\; |\xi|\rightarrow\infty$
\item $m(\xi)=s(2s-1)|A|^{2s-2}\xi^{2}+O(|\xi|^{3}),\; |\xi|\rightarrow 0$
\end{itemize}
Then $M$ behaves like $(-\partial_{xx})^{s}$ in high frequencies and like $s(2s-1)|A|^{2s-2}(-\partial_{xx})$ in low frequencies. Note also that, in order to have RE solutions in (\ref{fnls22_1}), lemma \ref{lemmaNR3} delimits the range of the fractional parameter $s$ and provides a specific value of the second Lagrange multiplier $\lambda_{0}^{2}$ in terms of $A$, which is consistent with the one obtained $A=\lambda_{0}^{2}/2$ for the classical NLS, cf. \cite{DuranS2000}.

The subfamily of solitary wave solutions (\ref{fnls22_7}) will now take the form
\begin{eqnarray}
\psi(x,t,a,c,x_{0},\theta_{0})&=&G_{(t\lambda_{0}^{1},t\lambda_{0}^{2})}(\varphi)\nonumber\\
&=&\rho(x-t\lambda_{0}^{2}-x_{0})e^{i(A(x-t\lambda_{0}^{2}-x_{0})+\theta_{0}+\lambda_{0}^{1}t)}.\label{fnls22_7b}
\end{eqnarray} 

where
\begin{itemize}
\item $a=B+|A|^{2s}=\lambda_{0}^{1}-(2s-1)|A|^{2s}$ plays the r\^{o}le of the amplitude of $\rho$.
\item $c_{s}=\lambda_{0}^{2}=2s|A|^{2s-2}A$ determines the speed.
\end{itemize}
We observe that, for the cubic case $\sigma=1$ and $s\in (1/2,1)$,  the traveling waves (\ref{fnls22_7b}) are exactly those obtained in \cite{HongS2017} with the identifications of the notation therein, cf. (\ref{fnls8a})
$$u_{0}(x)=e^{i\frac{c}{2}}Q(x),\quad \lambda_{0}^{1}=\omega^{2s}+(2s-1)\left|\frac{c}{2}\right|^{2s},\quad \lambda_{0}^{2}=2s\left|\frac{c}{2}\right|^{2s-2}\frac{c}{2}.$$ When $s=1$, the solutions (\ref{fnls22_7b}) are formulated in \cite{DuranS2000}.
\subsection{Existence of solitary waves}
\label{sec23}
In this section the existence of solutions $u_{0}=(v_{0},w_{0})$ of (\ref{fnls22_2}) will be studied using the Concentration-Compactness theory. To this end, we will consider the following family of minimization problems
\begin{eqnarray}
I_{\lambda}=\inf\{E(v,w): (v,w)\in H^{s}\times H^{s}, F(v,w)=\lambda\},\quad \lambda>0,\label{fnls_231}
\end{eqnarray}
for $s\in (1/2,1], \sigma>0$ and where
\begin{eqnarray}
E(v,w)&=&\frac{1}{2}\int_{\mathbb{R}}\left(v(-\partial_{xx})^{s}v+w(-\partial_{xx})^{s}w+\lambda_{0}^{1}(v^{2}+w^{2})\right.\nonumber\\
&&\left.-\lambda_{0}^{2}(vw_{x}-v_{x}w)\right)dx,\label{fnls_232}\\
F(v,w)&=&\frac{1}{2\sigma+2}\int_{\mathbb{R}}(v^{2}+w^{2})^{\sigma+1}dx.\nonumber
\end{eqnarray}
The functional (\ref{fnls_232}) can be written as
\begin{eqnarray}
E(v,w)=\frac{1}{2}\langle Q\begin{pmatrix}v\\w\end{pmatrix},\begin{pmatrix}v\\w\end{pmatrix}\rangle,\label{fnls_234}
\end{eqnarray}
where $Q$ is a matrix operator with Fourier symbol
\begin{eqnarray}
\widehat{Q}(\xi)=\begin{pmatrix}\lambda_{0}^{1}+|\xi|^{2s}&-i\lambda_{0}^{2}\xi\\i\lambda_{0}^{2}\xi &\lambda_{0}^{1}+|\xi|^{2s}\end{pmatrix},\quad \xi\in\mathbb{R}.\label{fnls_235}
\end{eqnarray}
\begin{lemma}
\label{lem24} Assume that $1/2<s\leq 1, \lambda_{0}^{1}>0$ and let
\begin{eqnarray}
c(\lambda_{0}^{1})=2s\left(\frac{\lambda_{0}^{1}}{2s-1}\right)^{\frac{2s-1}{2s}}.\label{fnls_236}
\end{eqnarray}
If 
\begin{eqnarray}
0<c_{s}=\lambda_{0}^{2}<c(\lambda_{0}^{1}),\label{fnls_236b}
\end{eqnarray}
 then the operator $Q$ given by (\ref{fnls_235}) is positive definite and defines a norm which is equivalent to the standard $H^{s}\times H^{s}$ norm. 
\end{lemma}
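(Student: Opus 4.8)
The statement is essentially a Fourier-multiplier positivity estimate, so the natural route is to diagonalize the symbol $\widehat{Q}(\xi)$ pointwise in $\xi$ and bound its eigenvalues from below by a multiple of $1+|\xi|^{2s}$. First I would compute the eigenvalues of the Hermitian matrix in (\ref{fnls_235}): since it has the form $\begin{pmatrix} p & -iq \\ iq & p \end{pmatrix}$ with $p=\lambda_{0}^{1}+|\xi|^{2s}$ and $q=\lambda_{0}^{2}\xi$, its eigenvalues are $\mu_{\pm}(\xi)=\lambda_{0}^{1}+|\xi|^{2s}\pm\lambda_{0}^{2}|\xi|$. The key quantity is therefore $\mu_{-}(\xi)=\lambda_{0}^{1}+|\xi|^{2s}-\lambda_{0}^{2}|\xi|$, and the whole lemma reduces to showing that, under (\ref{fnls_236b}), one has $\mu_{-}(\xi)\geq \delta\,(1+|\xi|^{2s})$ for some $\delta>0$ uniformly in $\xi\in\mathbb{R}$.

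The heart of the argument is then a one-variable calculus estimate on $g(r)=\lambda_{0}^{1}+r^{2s}-\lambda_{0}^{2}r$ for $r\ge 0$. I would show $g$ attains its minimum at the unique critical point $r_{*}$ solving $2s\,r_{*}^{2s-1}=\lambda_{0}^{2}$, i.e. $r_{*}=(\lambda_{0}^{2}/(2s))^{1/(2s-1)}$ (this uses $s>1/2$ so that $2s-1>0$ and the power function is increasing), and then evaluate $g(r_{*})=\lambda_{0}^{1}-(2s-1)\,r_{*}^{2s}=\lambda_{0}^{1}-(2s-1)\,(\lambda_{0}^{2}/(2s))^{2s/(2s-1)}$. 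A direct rearrangement shows $g(r_{*})>0$ is equivalent to $\lambda_{0}^{2}<2s\,(\lambda_{0}^{1}/(2s-1))^{(2s-1)/(2s)}=c(\lambda_{0}^{1})$, which is exactly hypothesis (\ref{fnls_236b}). Hence $\inf_{r\ge 0}g(r)=g(r_{*})=:m_{0}>0$. A short additional estimate upgrades this to the desired form: since $|\xi|^{2s}-\lambda_{0}^{2}|\xi|\ge -\tfrac{\lambda_{0}^{2}}{2s}\,r_{*}^{2s}$ on all of $\mathbb{R}$ while simultaneously $g(r)\ge \tfrac12 r^{2s}$ for $r$ large, one combines these (or simply notes $\mu_{-}(\xi)\ge m_{0}$ and $\mu_{-}(\xi)\ge |\xi|^{2s}-\lambda_{0}^{2}|\xi|$) to get $\mu_{-}(\xi)\ge \delta(1+|\xi|^{2s})$; the analogous lower bound for $\mu_{+}(\xi)\ge \mu_{-}(\xi)$ is automatic.

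With the pointwise bound $\delta(1+|\xi|^{2s})\,\mathrm{Id}\le \widehat{Q}(\xi)\le C(1+|\xi|^{2s})\,\mathrm{Id}$ in hand (the upper bound being trivial since $\lambda_{0}^{2}|\xi|\le \tfrac12(1+|\xi|^{2s})$ up to a constant for $s>1/2$), I would conclude via Plancherel: for $(v,w)\in H^{s}\times H^{s}$,
\begin{eqnarray*}
\langle Q\begin{pmatrix}v\\w\end{pmatrix},\begin{pmatrix}v\\w\end{pmatrix}\rangle=\int_{\mathbb{R}}\left(\widehat{Q}(\xi)\begin{pmatrix}\widehat{v}(\xi)\\\widehat{w}(\xi)\end{pmatrix}\right)\cdot\overline{\begin{pmatrix}\widehat{v}(\xi)\\\widehat{w}(\xi)\end{pmatrix}}\,d\xi,
\end{eqnarray*}
so that this quadratic form is bounded above and below by constant multiples of $\int_{\mathbb{R}}(1+|\xi|^{2s})(|\widehat{v}|^{2}+|\widehat{w}|^{2})\,d\xi$, which is precisely $\|(v,w)\|_{s}^{2}$. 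In particular $E(v,w)=\tfrac12\langle Q(v,w)^{T},(v,w)^{T}\rangle$ defines a norm equivalent to the $H^{s}\times H^{s}$ norm, and $Q$ is positive definite. The only genuinely delicate point is the calculus step identifying $\inf_{r\ge0}g(r)$ and checking that its positivity is exactly the stated threshold $c(\lambda_{0}^{1})$; everything else is routine diagonalization and Plancherel. I would also remark that $s>1/2$ is used twice and essentially: to make $g$ convex on $(0,\infty)$ with an interior minimum, and to absorb the first-order term $\lambda_{0}^{2}|\xi|$ into $1+|\xi|^{2s}$ at infinity.
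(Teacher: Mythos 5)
Your proposal is correct and follows essentially the same route as the paper: diagonalize the Hermitian symbol $\widehat{Q}(\xi)$, minimize $g(r)=\lambda_{0}^{1}+r^{2s}-\lambda_{0}^{2}r$ over $r\geq 0$ to see that positivity of the minimum is exactly the threshold condition $\lambda_{0}^{2}<c(\lambda_{0}^{1})$, and then upgrade to two-sided bounds $\alpha_{0}+\alpha_{1}|\xi|^{2s}\leq\mu_{\pm}(\xi)\leq\beta_{0}+\beta_{1}|\xi|^{2s}$ and conclude by Plancherel. (Your critical point $r_{*}=(\lambda_{0}^{2}/(2s))^{1/(2s-1)}$ is the correct one; the paper's displayed $x^{*}=(\lambda_{0}^{1}/2s)^{1/(2s-1)}$ is a typo, as its own evaluation of $F_{-}(x^{*})$ confirms.)
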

\begin{proof}
Note first that $\widehat{Q}(\xi)^{*}=\widehat{Q}(\xi), \xi\in\mathbb{R}$; then, from the representation (\ref{fnls_235}) and Plancherel identity, it holds that the operator $Q$ is Hermitian. The real eigenvalues of the matrix (\ref{fnls_235}) are
\begin{eqnarray*}
\lambda_{\pm}(\xi)=|\xi|^{2s}+\lambda_{0}^{1}\pm\lambda_{0}^{2}\xi.\label{fnls_237}
\end{eqnarray*}
(Note that $\lambda_{+}(-\xi)=\lambda_{-}(\xi)$.) We consider the functions
\begin{eqnarray*}
F_{-}(x)&=&x^{2s}+\lambda_{0}^{1}-\lambda_{0}^{2}x,\quad x\geq 0,\\
F_{+}(x)&=&|x|^{2s}+\lambda_{0}^{1}+\lambda_{0}^{2}x,\quad x\leq 0.
\end{eqnarray*}
Note that $F_{-}'(x)=0\Leftrightarrow x=x^{*}=\left(\lambda_{0}^{1}/2s\right)^{\frac{1}{2s-1}}$. Since $s>1/2$ and $F_{-}(x)\rightarrow +\infty$ as $x\rightarrow +\infty$, then $F_{-}$ attains a minimum at $x=x^{*}$ with
$$
F_{-}(x^{*})=\lambda_{0}^{1}-(2s-1)\left(\frac{\lambda_{0}^{2}}{2s}\right)^{\frac{2s}{2s-1}},
$$ which is positive under the hypothesis (\ref{fnls_236b}). This implies that
\begin{eqnarray}
\lambda_{+}(\xi)\geq \lambda_{-}(\xi)>0,\quad \xi\geq 0.\label{fnls_237b}
\end{eqnarray}
Similarly, since $F_{+}(-x)=F_{-}(x)$, then $F_{+}(x)$ attains a minimum at $x=-x^{*}$ with $F_{+}(-x^{*})>0$ by (\ref{fnls_236b}). Therefore
\begin{eqnarray}
\lambda_{-}(\xi)\geq \lambda_{+}(\xi)>0,\quad \xi\leq 0.\label{fnls_237c}
\end{eqnarray}
Thus, (\ref{fnls_237b}), (\ref{fnls_237c}) imply that $Q$ is positive definite. For the second part of the lemma, we will prove the existence of positive constants $\alpha_{j}, \beta_{j}, j=1,2$ such that for all $\xi\in\mathbb{R}$
\begin{eqnarray}
\alpha_{0}+\alpha_{1}|\xi|^{2s}<\lambda_{+}(\xi), \lambda_{-}(\xi)<\beta_{0}+\beta_{1}|\xi|^{2s}.\label{fnls_238}
\end{eqnarray}
Observe first that
$$
\lambda_{+}(\xi)+\lambda_{-}(\xi)=2(\lambda_{0}^{1}+|\xi|^{2s}).
$$ Therefore, from (\ref{fnls_237b}), (\ref{fnls_237c})
\begin{eqnarray*}
&&0<\lambda_{-}(\xi)\leq \lambda_{+}(\xi)<2(\lambda_{0}^{1}+|\xi|^{2s}),\\
&&0<\lambda_{+}(\xi)\leq \lambda_{-}(\xi)<2(\lambda_{0}^{1}+|\xi|^{2s}),\quad \xi\geq 0,
\end{eqnarray*}
and the second inequality of (\ref{fnls_238}) holds by taking $\beta_{0}=2\lambda_{0}^{1}, \beta_{1}=2$. On the other hand, the first inequality is obtained from $\alpha_{1}\in (0,1), \alpha_{0}=\alpha_{1}\lambda_{0}^{1}$, and using (\ref{fnls_236b}), (\ref{fnls_237b}), and (\ref{fnls_237c}).
\end{proof}
The following properties are required by the application of the Concentration-Compactness theory.
\begin{proposition}
\label{propos25}
Under the hypotheses of Lemma \ref{lem24}, there holds:
\begin{itemize}
\item[(i)] The functional $E:H^{s}\times H^{s}\rightarrow\mathbb{R}$ given by (\ref{fnls_232}) is well sefined and there are positive constants $C_{j}=C_{j}(\lambda_{0}^{1},\lambda_{0}^{2},s), j=1,2$ such that
\begin{eqnarray}
C_{1}||(v,w)||_{s}^{2}\leq E(v,w)\leq C_{2}||(v,w)||_{s}^{2}.\label{fnls_238b}.
\end{eqnarray}
\item[(ii)] $I_{\lambda}>0$ for $\lambda>0$.
\item[(iii)] All minimizing sequences for $I_{\lambda}, \lambda>0$, are bounded in $H^{s}\times H^{2}$.
\item[(iv)] For all $\theta\in (0,\lambda)$
\begin{eqnarray}
I_{\lambda}<I_{\theta}+I_{\lambda-\theta}.\label{fnls_238c}
\end{eqnarray}
\end{itemize}
\end{proposition}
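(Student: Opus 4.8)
The plan is to verify the four items of Proposition \ref{propos25} in order, since each builds on the previous one. For item (i), I would use Lemma \ref{lem24}: the estimate (\ref{fnls_238b}) is essentially a restatement of the norm equivalence established there, once $E$ is written in the form (\ref{fnls_234}) via the Fourier symbol (\ref{fnls_235}) and Plancherel's identity. The only thing requiring care is that $E$ is finite on all of $H^s\times H^s$; this follows because the symbols $\lambda_\pm(\xi)$ are bounded above and below by multiples of $\lambda_0^1+|\xi|^{2s}$ (the two-sided bound (\ref{fnls_238}) from the lemma), and $\int (\lambda_0^1+|\xi|^{2s})(|\widehat v|^2+|\widehat w|^2)\,d\xi$ is comparable to $\|(v,w)\|_s^2$ because $1+|\xi|^{2s}\sim (1+|\xi|^2)^s$.

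For item (ii), the argument is a Sobolev/Gagliardo--Nirenberg estimate combined with scaling. Given $(v,w)$ with $F(v,w)=\lambda$, I would bound $F(v,w)=\frac{1}{2\sigma+2}\|(v,w)\|_{L^{2\sigma+2}}^{2\sigma+2}$ from above using the embedding $H^s\hookrightarrow L^{2\sigma+2}$ (valid since $s>1/2\ge$ the critical Sobolev exponent for any finite $\sigma$ in one dimension, as $H^s(\mathbb R)\hookrightarrow L^q$ for all $q\in[2,\infty)$ when $s\ge 1/2$; strictly speaking $s>1/2$ gives it with room to spare). This yields $\lambda = F(v,w)\le C\|(v,w)\|_s^{2\sigma+2}\le C' E(v,w)^{\sigma+1}$ by item (i), hence $E(v,w)\ge (\lambda/C')^{1/(\sigma+1)}>0$ uniformly over the constraint set, so $I_\lambda\ge (\lambda/C')^{1/(\sigma+1)}>0$.

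Item (iii) is then immediate: if $(v_n,w_n)$ is a minimizing sequence for $I_\lambda$ then $E(v_n,w_n)$ is bounded, and by the lower bound in (\ref{fnls_238b}) so is $\|(v_n,w_n)\|_s^2$. (I take it the \lq\lq $H^s\times H^2$\rq\rq\ in the statement is a typo for $H^s\times H^s$.) Item (iv), the strict subadditivity, is the standard and most delicate ingredient of the Concentration--Compactness method. The key observation is the scaling behaviour of the two functionals: if one replaces $(v,w)$ by $(t^{\kappa}v,t^{\kappa}w)$ for a suitable exponent $\kappa=\kappa(\sigma)$ chosen so that $F$ scales linearly in $t$ — namely $\kappa=1/(2\sigma+2)$, giving $F(t^\kappa v,t^\kappa w)=t\,F(v,w)$ — while $E(t^\kappa v,t^\kappa w)=t^{2\kappa}E(v,w)=t^{1/(\sigma+1)}E(v,w)$. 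Since $1/(\sigma+1)<1$ for $\sigma>0$, the map $\theta\mapsto I_\theta$ satisfies $I_\theta=\theta^{1/(\sigma+1)}I_1$, and strict concavity of $\theta\mapsto\theta^{1/(\sigma+1)}$ on $(0,\infty)$ gives $I_\lambda = \lambda^{1/(\sigma+1)}I_1 < (\theta^{1/(\sigma+1)}+(\lambda-\theta)^{1/(\sigma+1)})I_1 = I_\theta+I_{\lambda-\theta}$, using $I_1>0$ from item (ii). The main obstacle is making sure this scaling identity $I_\theta=\theta^{1/(\sigma+1)}I_1$ is rigorously justified — one must check that the rescaling $(v,w)\mapsto(t^\kappa v,t^\kappa w)$ is a bijection between the constraint sets $\{F=\theta\}$ and $\{F=\theta t\}$ within $H^s\times H^s$, which is clear, and that infima transform accordingly; this is routine but is the step where the precise homogeneity of $E$ and $F$ must be used, and it is where one sees that the positivity and equivalence established in items (i)--(ii) are genuinely needed.
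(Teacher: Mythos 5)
Your proposal is correct and follows essentially the same route as the paper: item (i) from Lemma \ref{lem24} and the quadratic-form representation (\ref{fnls_234}), item (ii) from the embedding of $H^{s}$ (the paper goes through $L^{\infty}$ and $\|(v,w)\|_{L^{2\sigma+2}}^{2\sigma+2}\leq \|(v,w)\|_{L^{\infty}}^{2\sigma}\|(v,w)\|_{L^{2}}^{2}$ rather than $H^{s}\hookrightarrow L^{2\sigma+2}$ directly, but this is immaterial), item (iii) from coercivity, and item (iv) from the homogeneity relation $I_{\tau\lambda}=\tau^{1/(\sigma+1)}I_{\lambda}$ combined with strict subadditivity of $\tau\mapsto\tau^{1/(\sigma+1)}$ and the positivity $I_{\lambda}>0$ from (ii). Your reading of ``$H^{s}\times H^{2}$'' as a typo for $H^{s}\times H^{s}$ is also the intended one.
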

\begin{proof}
Continuity and coercivity properties (\ref{fnls_238b}) of $E$ follow from (\ref{fnls_234}) and Lemma \ref{lem24}. On the other hand, since $s>1/2$, it holds that if $I_{\lambda}$ is attained at $(v,w)\in H^{s}\times H^{s}$, then $(v,w)\in L^{\infty}\times L^{\infty}$ and
\begin{eqnarray}
0<\lambda=F(v,w)&\leq &\frac{1}{2\sigma+2}||(v,w)||_{L^{\infty}}^{2\sigma}||(v,w)||_{L^{2}}^{2}\label{fnls_239b}\\
&\leq & \frac{1}{2\sigma+2}||(v,w)||_{s}^{2\sigma+2}.\label{fnls_239}
\end{eqnarray}
Now, using (\ref{fnls_238b}) and (\ref{fnls_239}), we have
$$I_{\lambda}=E(v,w)\geq C\lambda^{\frac{1}{\sigma+1}},$$ for some positive constant $C$, which implies (ii). Property (iii) follows from the coercivity of $E$ in (\ref{fnls_238b}). Note finally that, since $E$ and $F$ are homogeneous of degree $2$ and $2\sigma+2$ respectively, then
$$I_{\tau\lambda}=\tau^{\frac{1}{\sigma+1}}I_{\lambda}.$$ Therefore, if $\theta\in (0,\lambda)$, then we write $\theta=\tau\lambda, \tau\in (0,1)$ and since $\sigma>0$ we have
\begin{eqnarray*}
I_{\theta}+I_{\lambda-\theta}&=&I_{\tau\lambda}+I_{(1-\tau)\lambda}=\tau^{\frac{1}{\sigma+1}}I_{\lambda}+(1-\tau)^{\frac{1}{\sigma+1}}I_{\lambda}\\
&>&(\tau+(1-\tau))I_{\lambda},
\end{eqnarray*}
and (\ref{fnls_238c}) follows.
\end{proof}
The existence of solutions of (\ref{fnls22_2}) for $s\in (1/2,1], \sigma\lambda_{0}^{1}>0$ and $c_{s}=\lambda_{0}^{2}$ satisfying (\ref{fnls_236b}) will be proved via the Concentration-Compactness theory, \cite{Lions}, as well as the convergence of a minimizing sequence of (\ref{fnls_231}) modulo the symmetry group (\ref{symg}) to some $(v,w)\in H^{s}\times H^{s}$ with $\lambda=F(v,w)$. Let $\{(v_{n},w_{n})\}_{n}$ be a minimizing sequence for (\ref{fnls_231}) and consider the sequence of nonnegative functions
\begin{eqnarray*}
\rho_{n}(x)=|D^{s}v_{n}(x)|^{2}+|D^{s}w_{n}(x)|^{2}+v_{n}(x)^{2}+w_{n}(x)^{2}.\label{fnls_2310}
\end{eqnarray*}
Then $\rho_{n}\in L^{1}$ and
$\lambda_{n}=||\rho_{n}||_{L^{1}}=||(v_{n},w_{n})||_{s}^{2}.$ Then, using (\ref{fnls_238b}) and Proposition \ref{propos25}(ii), $\lambda_{n}$ is bounded and
$$\lambda_{n}>(\lambda(2\sigma+2))^{\frac{1}{\sigma+1}}.$$ 
Let $\sigma=\lim_{n\rightarrow\infty}\lambda_{n}>0$. We can normalize $\rho_{n}$ as
$\widetilde{\rho}_{n}(x)=\sigma\rho_{n}(\lambda_{n}x)$, to have
$$\widetilde{\lambda}_{n}=||\widetilde{\rho}_{n}||_{L^{1}}=\sigma.$$
(Tildes are dropped from now on.) Then we apply Lemma 1.1 of \cite{Lions} to derive the existence of a subsequence $\{\rho_{n_{k}}\}_{k\geq 1}$ satisfying one of the following three conditions:
\begin{itemize}
\item[(1)] (Compactness.) There are $y_{k}\in\mathbb{R}$ such that $\rho_{n_{k}}(\cdot+y_{k})$ satisfies that for any $\epsilon>0$ there exists $R=R(\epsilon)>0$ large enough such that
\begin{eqnarray*}
\int_{|x-y_{k}|\leq R}\rho_{n_{k}}(x)dx\geq \sigma-\epsilon.
\end{eqnarray*}
\item[(2)] (Vanishing.) For any $R>0$
\begin{eqnarray}
\lim_{k\rightarrow\infty}\sup_{y\in\mathbb{R}}\int_{|x-y|\leq R}\rho_{n_{k}}(x)dx=0.\label{fnls_2311}
\end{eqnarray}
\item[(3)] (Dichotomy.) There is $\theta_{0}\in (0,\sigma)$ such that for any $\epsilon>0$ there exists $k_{0}\geq 1$ and $\rho_{k,{1}}, \rho_{k,{2}}\in L^{1}$ with $\rho_{k_{1}}, \rho_{k_{2}}\geq 0$ such that for $k\geq k_{0}$
\begin{eqnarray}
&&\int_{\mathbb{R}}|\rho_{n_{k}}-(\rho_{k,1}+\rho_{k,2})|dx\leq \epsilon,\label{fnls_2312}\\
&&\left|\int_{\mathbb{R}}\rho_{k,1}dx-\theta_{0}\right|\leq\epsilon,\quad
\left|\int_{\mathbb{R}}\rho_{k,2}dx-(\sigma-\theta_{0})\right|\leq\epsilon,
\end{eqnarray}
with
\begin{eqnarray*}
&&{\rm supp}\rho_{k,1}\cap{\rm supp}\rho_{k,2}=\emptyset,\\
&&{\rm dist}\left({\rm supp}\rho_{k,1},{\rm supp}\rho_{k,2}\right)\rightarrow+\infty,\; k\rightarrow\infty.
\end{eqnarray*}
Since the supports of $\rho_{k,1}$ and $\rho_{k,2}$ are disjoint, we may assume the existence of $R_{0}>0$ and sequences $\{y_{k}\}_{k}$ and $R_{k}\rightarrow\infty$ as $k\rightarrow\infty$ such that, \cite{AnguloS2020}
\begin{eqnarray}
&&{\rm supp}\rho_{k,1}\subset (y_{k}-R_{0},y_{k}+R_{0}),\nonumber\\
&&{\rm supp}\rho_{k,2}\subset (-\infty,y_{k}-2R_{k})\cup (y_{k}+2R_{k},\infty).\label{fnls_2312a}
\end{eqnarray}
Thus, using (\ref{fnls_2312}), (\ref{fnls_2312a}) it holds that, \cite{AnguloS2020}
\begin{eqnarray}
&&\int_{|x-y_{k}|\leq R_{0}}|\rho_{n_{k}}(x)-\rho_{k,1}(x)|dx\leq \epsilon\label{fnls_2312b}\\
&&\int_{|x-y_{k}|\geq 2R_{k}}|\rho_{n_{k}}(x)-\rho_{k,2}(x)|dx\leq \epsilon,\;
\int_{R_{0}\leq |x-y_{k}|\leq 2R_{k}}\rho_{n_{k}}(x)dx\leq \epsilon.\nonumber
\end{eqnarray}
\end{itemize}
We first rule out the vanishing property. If (\ref{fnls_2311}) hods, then
\begin{eqnarray*}
\lim_{k\rightarrow\infty}\sup_{y\in\mathbb{R}}\int_{|x-y|\leq R}\left(|v_{n_{k}}(x)|^{2}+|w_{n_{k}}(x)|^{2}\right)dx=0.
\end{eqnarray*}
Since $\{(v_{n},w_{n})\}_{n}$ is a bounded sequence in $H^{s}\times H^{s}$ (cf. Proposition \ref{propos25}(iii)), we apply (\ref{fnls_239b}) to have
\begin{eqnarray*}
F(v_{n_{k}},w_{n_{k}})\leq C||(v_{n_{k}},w_{n_{k}})||_{L^{2}}^{2},
\end{eqnarray*}
for some constant $C$. Therefore
$$\lim_{k\rightarrow\infty}F(v_{n_{k}},w_{n_{k}})=0,$$ which contradicts the fact that $\lambda>0$.

We now assume that dichotomy holds and use similar arguments to those of \cite{AnguloS2020} to rule it out. Let $h_{n_{k}}=(v_{n_{k}},w_{n_{k}})$ and cutoff functions $\varphi,\phi\in C^{\infty}(\mathbb{R}), 0\leq\varphi,\phi\leq 1$, with
\begin{eqnarray*}
&&\phi(x)=1,\quad |x|\leq 1,\quad \phi(x)=0,\quad |x|\geq 2,\\
&&\varphi(x)=1,\quad |x|\geq 2,\quad \varphi(x)=0,\quad |x|\leq 1.
\end{eqnarray*}
Let $R_{1}>R_{0}$ and
\begin{eqnarray*}
\phi_{k}=\phi\left(\frac{x-y_{k}}{R_{1}}\right),\quad 
\varphi_{k}=\varphi\left(\frac{x-y_{k}}{R_{k}}\right),\quad x\in\mathbb{R}.
\end{eqnarray*}
Then
\begin{eqnarray*}
&&{\rm supp}\phi_{k}\subset (y_{k}-2R_{1},y_{k}+2R_{1}),\\
&&{\rm supp}\varphi_{k}\subset (-\infty,y_{k}-2R_{k})\cup (y_{k}+2R_{k},\infty).
\end{eqnarray*}
We define $h_{k,1}=\phi_{k}h_{n_{k}}, h_{k,2}=\varphi_{k}h_{n_{k}}$, and 
$$z_{k}=h_{n_{k}}-(h_{k,1}+h_{k,2})=\chi_{k} h_{n_{k}},\quad \chi_{k}=1-\phi_{k}-\varphi_{k}.$$
Since $\chi_{k}\in C^{\infty}(\mathbb{R})$ and ${\rm supp}\chi_{k}\subset \{x\in\mathbb{R}/ R_{1}\leq |x-y_{k}|\leq R_{k}\}$, then its Fourier transform decays exponentially. Using this fact, (\ref{fnls_2312b}), and the Kato-Ponce inequalities, \cite{KP,KPV}
\begin{eqnarray*}
||D^{s}(\chi_{k}g)||_{L^{2}}\leq C\left(||D^{s}\chi_{k}||_{L^{\infty}}||g||_{L^{2}}+||\chi_{k}||_{L^{\infty}}||D^{s}g||_{L^{2}}\right),
\end{eqnarray*}
with $g=v_{n_{k}},w_{n_{k}}$, we have
\begin{eqnarray*}
||\chi_{k}v_{n_{k}}||_{s}^{2}\leq C\int_{R_{1}\leq |x-y_{k}|\leq R_{k}}\rho_{n_{k}}(x)dx=O(\epsilon),\quad ||\chi_{k}w_{n_{k}}||_{s}^{2}=O(\epsilon).
\end{eqnarray*}
Therefore
\begin{eqnarray}
||z_{k}||_{s}=O(\epsilon).\label{fnls_2313}
\end{eqnarray}
On the other hand, since
\begin{eqnarray*}
F(\phi_{k}v_{n_{k}},\phi_{k}w_{n_{k}})=\frac{1}{2\sigma+2}\int_{\mathbb{R}}\phi_{k}^{2\sigma+2}(v_{n_{k}}^{2}+w_{n_{k}}^{2})^{\sigma+1}dx,
\end{eqnarray*}
is bounded, then there is a subsequence of $h_{k,1}$ (denoted again by $h_{k,1}$) and $\theta\in\mathbb{R}$ such that
\begin{eqnarray}
\left|\frac{1}{2\sigma+2}\int_{\mathbb{R}}\phi_{k}^{2\sigma+2}(v_{n_{k}}^{2}+w_{n_{k}}^{2})^{\sigma+1}dx-\theta\right|\leq \epsilon,\quad k\geq k_{0}.\label{fnls_2313b}
\end{eqnarray}
Furthermore, since $\lambda=F(v_{n_{k}},w_{n_{k}})$
\begin{eqnarray}
&&\left|\frac{1}{2\sigma+2}\int_{\mathbb{R}}\varphi_{k}^{2\sigma+2}(v_{n_{k}}^{2}+w_{n_{k}}^{2})^{\sigma+1}dx-(\lambda-\theta)\right|\nonumber\\
&&=\left|\frac{1}{2\sigma+2}\int_{\mathbb{R}}(\varphi_{k}^{2\sigma+2}-1)(v_{n_{k}}^{2}+w_{n_{k}}^{2})^{\sigma+1}dx+\theta\right|\nonumber\\
&&\leq \left|\frac{1}{2\sigma+2}\int_{\mathbb{R}}(\varphi_{k}^{2\sigma+2}+\phi_{k}^{2\sigma+2}-1)(v_{n_{k}}^{2}+w_{n_{k}}^{2})^{\sigma+1}dx\right|+\epsilon\label{fnls_2314}\\
&&\leq  \frac{1}{2\sigma+2}\int_{\mathbb{R}}\chi_{k}^{2\sigma+2}(v_{n_{k}}^{2}+w_{n_{k}}^{2})^{\sigma+1}dx+\epsilon\leq C||z_{k}||_{s}^{2\sigma+2}+\epsilon=O(\epsilon).\nonumber
\end{eqnarray}
The next step consists of proving that for $R_{1}$ and $R_{k}$ large enough
\begin{eqnarray}
E(h_{n_{k}})=E(h_{k,1})+E(h_{k,2})+O(\epsilon).\label{fnls_2315}
\end{eqnarray}
Note first that using (\ref{fnls_234}) and Lemma \ref{lem24} we can write
\begin{eqnarray}
E(h_{n_{k}})&=&E(h_{k,1})+E(h_{k,2})+E(z_{k})\nonumber\\
&&+\langle Qz_{k},h_{k,21}+h_{k,2}\rangle+\langle Qh_{k,1},h_{k,2}\rangle.\label{fnls_2316}
\end{eqnarray}
From Lemma \ref{lem24} and (\ref{fnls_2313}), we have
\begin{eqnarray}
\left|\langle Qz_{k},h_{k,21}+h_{k,2}\rangle\right|&\leq &||Qz_{k}||_{L^{2}}||h_{k,1}+h_{k,2}||_{L^{2}}\nonumber\\
&\leq & C||z_{k}||_{s}\left(||h_{k,1}||_{L^{2}}+||h_{k,2}||_{L^{2}}\right)\nonumber\\
&\leq &C||z_{k}||_{s}||h_{n_{k}}||_{L^{2}}=O(\epsilon).\label{fnls_2316b}
\end{eqnarray}
For the control of the last term of (\ref{fnls_2316}), we write
\begin{eqnarray}
\langle Qh_{k,1},h_{k,2}\rangle=\int_{\mathbb{R}}\left(\varphi_{k}v_{n_{k}}D^{2s}(\phi_{k}v_{n_{k}})+\varphi_{k}w_{n_{k}}D^{2s}(\phi_{k}w_{n_{k}})\right)dx.\label{fnls_2317}
\end{eqnarray}
Using the definition of the Calderon-commutator (\ref{Calc}) and the fact that $\phi_{k}\varphi_{k}=0$, we have
\begin{eqnarray*}
\varphi_{k}v_{n_{k}}D^{2s}(\phi_{k}v_{n_{k}})&=&\varphi_{k}v_{n_{k}}\left(\phi_{k}D^{2s}v_{n_{k}}+[D^{2s},\phi_{k}]v_{n_{k}}\right)\\
&=&\varphi_{k}v_{n_{k}}[D^{2s},\phi_{k}]v_{n_{k}}.
\end{eqnarray*}
Thus, Kato-Ponce inequality, \cite{KP,KPV}, leads to
\begin{eqnarray}
\left\|[D^{2s},\phi_{k}]v_{n_{k}}\right\|_{L^{2}}\leq C\left(||\varphi_{k}'||_{L^{\infty}}||D^{2s-1}v_{n_{k}}||_{L^{2}}+||D^{2s}\phi_{k}||_{L^{\infty}}||v_{n_{k}}||_{L^{2}}\right).\label{fnls_2318}
\end{eqnarray}
Since $s\in (1/2,1]$ then $2s>1$ and $s\geq 2s-1$; the embeddings $H^{2s}\subset H^{1}, H^{s}\subset H^{2s-1}$ imply that the right-hand side of (\ref{fnls_2318}) is bounded. Therefore
\begin{eqnarray*}
\left|\int_{\mathbb{R}}\varphi_{k}v_{n_{k}}D^{2s}(\phi_{k}v_{n_{k}})dx\right|&=&
\left|\int_{\mathbb{R}}\varphi_{k}v_{n_{k}}[D^{2s},\phi_{k}]v_{n_{k}}dx\right|\\
&\leq & ||\varphi_{k}||_{L^{\infty}}||v_{n_{k}}||_{L^{2}}C\left\|[D^{2s},\phi_{k}]v_{n_{k}}\right\|_{L^{2}}\\
&\leq & C\frac{||\phi'||_{L^{\infty}}}{R_{1}}||v_{n_{k}}||_{s}^{2},
\end{eqnarray*}
and similarly
\begin{eqnarray*}
\left|\int_{\mathbb{R}}\varphi_{k}w_{n_{k}}D^{2s}(\phi_{k}w_{n_{k}})dx\right|\leq C\frac{||\phi'||_{L^{\infty}}}{R_{1}}||w_{n_{k}}||_{s}^{2}.
\end{eqnarray*}
Thus for $R_{1}$ large enough we obtain that (\ref{fnls_2317}) is $O(\epsilon)$. Using this fact, (\ref{fnls_2316}), (\ref{fnls_2316b}), and Proposition \ref{propos25}(i), then (\ref{fnls_2315}) follows. Moreover
\begin{eqnarray}
I_{\lambda}&\geq & \lim_{k\rightarrow\infty}\inf E(h_{n_{k}})\geq  \lim_{k\rightarrow\infty}\inf E(h_{k,1})\nonumber\\
&&+\lim_{k\rightarrow\infty}\inf E(h_{k,2})+O(\epsilon).\label{fnls_2318b}
\end{eqnarray}

The last estimates that we need are the following: For $R_{1},R_{k}$ large enough
\begin{eqnarray}
\left|||\phi_{k}v_{n_{k}}||_{s}^{2}+||\phi_{k}w_{n_{k}}||_{s}^{2}-\int_{\mathbb{R}}\rho_{k,1}dx\right|&=&O(\epsilon),\label{fnls_2319}\\
\left|||\varphi_{k}v_{n_{k}}||_{s}^{2}+||\varphi_{k}w_{n_{k}}||_{s}^{2}-\int_{\mathbb{R}}\rho_{k,2}dx\right|&=&O(\epsilon).\label{fnls_2320}
\end{eqnarray}
For the proof, we note that using the Calderon commutator (\ref{Calc}), the left-hand side of (\ref{fnls_2319}) can be written as
\begin{eqnarray}
&&\left|||\phi_{k}v_{n_{k}}||_{s}^{2}+||\phi_{k}w_{n_{k}}||_{s}^{2}-\int_{\mathbb{R}}\rho_{k,1}dx\right|\nonumber\\
&&\left|\int_{\mathbb{R}}\left(\phi_{k}^{2}(v_{n_{k}}^{2}+w_{n_{k}}^{2})+\phi_{k}v_{n_{k}}\left(\phi_{k}D^{2s}v_{n_{k}}+[D^{2s},\phi_{k}]v_{n_{k}}\right)\right.\right.\nonumber\\
&&\left.\left.\phi_{k}w_{n_{k}}\left(\phi_{k}D^{2s}w_{n_{k}}+[D^{2s},\phi_{k}]w_{n_{k}}\right)-\rho_{k,1}\right)dx\right|.\label{fnls_2319c}
\end{eqnarray}
Then, (\ref{fnls_2319}) follows from the property $0\leq \phi_{k}\leq 1$, (\ref{fnls_2312}), and (\ref{fnls_2318}). For the proof of (\ref{fnls_2320}), we can write its left-hand side in the same way as in (\ref{fnls_2319c}) and apply that $0\leq \varphi_{k}\leq 1$, (\ref{fnls_2312}), and the arguments of (\ref{fnls_2318}) to have
\begin{eqnarray*}
\left|\int_{\mathbb{R}}\varphi_{k}v_{n_{k}}D^{2s}(\varphi_{k}v_{n_{k}})dx\right|&\leq & C\frac{||\varphi'||_{L^{\infty}}}{R_{k}}||v_{n_{k}}||_{s}^{2},\\
\left|\int_{\mathbb{R}}\varphi_{k}w_{n_{k}}D^{2s}(\varphi_{k}w_{n_{k}})dx\right|&\leq & C\frac{||\varphi'||_{L^{\infty}}}{R_{k}}||w_{n_{k}}||_{s}^{2}.
\end{eqnarray*}

We are now ready to rule out dichotomy. Concerning the limit $\theta$ in (\ref{fnls_2313b}), we have the following possibilities:
\begin{itemize}
\item[(1)] $\theta=0$. Then, from (\ref{fnls_2314}) and $k$ large
\begin{eqnarray*}
|F(h_{k,2})-\lambda|<\epsilon.
\end{eqnarray*}
If we tale $\epsilon<\lambda/2$ then
\begin{eqnarray}
F(h_{k,2})>\lambda-\epsilon>\frac{\lambda}{2}>0.\label{fnls_2320b}
\end{eqnarray}
Thus, if we consider 
$$d_{k}=\left(\frac{\lambda}{F(h_{k,2})}\right)^{\frac{1}{2\sigma+2}},$$ then $\lambda=F(d_{k}h_{k,2})$ and
\begin{eqnarray}
|d_{k}-1|&<&\left(\frac{2}{\lambda}\right)^{\frac{1}{2\sigma+2}}\left|\lambda^{\frac{1}{2\sigma+2}}-F(h_{k,2})^{\frac{1}{2\sigma+2}}\right|\nonumber\\
&<&\left(\frac{2}{\lambda}\right)^{\frac{1}{2\sigma+2}}\frac{\left|\lambda^{\frac{1}{2\sigma+2}}-F(h_{k,2})^{\frac{1}{2\sigma+2}}\right|}{\left|\lambda-F(h_{k,2})\right|}\epsilon.\label{fnls_2321}
\end{eqnarray}
For $x>0$ let $g(x)=x^{\frac{1}{2\sigma+2}}$. The numerator in (\ref{fnls_2321}) can be written as
\begin{eqnarray*}
\left|\lambda^{\frac{1}{2\sigma+2}}-F(h_{k,2})^{\frac{1}{2\sigma+2}}\right|&=&\left|g(\lambda)-g(F(h_{k,2}))\right|\\
&=&\left|g'(\xi_{k})(\lambda-F(h_{k,2}))\right|,
\end{eqnarray*}
for some $\xi_{k}$ between $F(h_{k,2})$ and $\lambda$ and consequently, by (\ref{fnls_2320b}), between $\lambda/2$ and $\lambda$. Therefore
\begin{eqnarray}
g'(\xi_{k})=\frac{1}{2\sigma+2}\left(\frac{1}{\xi_{k}}\right)^{\frac{2\sigma+1}{2\sigma+2}}<\frac{1}{2\sigma+2}\left(\frac{2}{\lambda}\right)^{\frac{2\sigma+1}{2\sigma+2}}.\label{fnls_2322}
\end{eqnarray}
Inserting (\ref{fnls_2322}) into (\ref{fnls_2321}) yields
\begin{eqnarray*}
|d_{k}-1|<\frac{1}{2\sigma+2}\left(\frac{2}{\lambda}\right)\epsilon.
\end{eqnarray*}
Thus $\lim_{k\rightarrow\infty}d_{k}=1$ and
\begin{eqnarray}
I_{\lambda}\leq E(d_{k}h_{k,2})=d_{k}^{2}E(h_{k,2})=E(h_{k,2})+O(\epsilon).\label{fnls_2323}
\end{eqnarray}
Now, from Proposition \ref{propos25}(i), (\ref{fnls_2319}), and (\ref{fnls_2312}), we have 
\begin{eqnarray*}
\lim_{k\rightarrow\infty}\inf E(h_{k,1})&\geq &C\lim_{k\rightarrow\infty}\inf ||h_{k,1}||_{s}^{2}\\
&\geq & C\lim_{k\rightarrow\infty}\inf ||\rho_{k,1}||_{L^{1}}+O(\epsilon)\geq C\theta_{0}+O(\epsilon).
\end{eqnarray*}
Therefore, (\ref{fnls_2318b}) and (\ref{fnls_2323}) imply, for $\epsilon>0$ arbitrarily small
\begin{eqnarray*}
I_{\lambda}\geq C\theta_{0}+I_{\lambda}+O(\epsilon),
\end{eqnarray*}
which leads to $I_{\lambda}\geq C\theta_{0}+I_{\lambda}$, that is, the contradiction $\theta_{0}\leq 0$.
\item[(2)] If we assume $\lambda>\theta>0$, then (\ref{fnls_2313b}), (\ref{fnls_2314}), and the previous arguments applied to $\lambda-\theta$ and $\theta$ lead to
\begin{eqnarray*}
I_{\lambda-\theta}\leq E(h_{k,2})+O(\epsilon),\quad 
I_{\theta}\leq E(h_{k,1})+O(\epsilon).
\end{eqnarray*}
Therefore, from (\ref{fnls_2318b}), it holds that
\begin{eqnarray*}
I_{\lambda}\geq I_{\lambda-\theta}+I_{\theta}+O(\epsilon),
\end{eqnarray*}
for $\epsilon>0$ arbitrarily small. This contradicts Proposition \ref{propos25}(iv).
\item[(3)] If $\theta<0$ then, from (\ref{fnls_2314})
\begin{eqnarray*}
\lim_{k\rightarrow\infty}F(h_{k,2})=\lambda-\theta>\frac{\lambda}{2},
\end{eqnarray*}
and a similar analysis to item (1) leads to contradiction.
\item[(4)] If $\theta=\lambda$, then 
$$\lim_{k\rightarrow\infty}F(h_{k,1})=\lambda;$$ therefore, as in item (1) we may take $\epsilon$ small enough so that $F(h_{k,1})>\lambda/2$ for $k$ large. The same arguments apply to have
\begin{eqnarray*}
I_{\lambda}\leq E(h_{k,1})+O(\epsilon),
\end{eqnarray*}
and
\begin{eqnarray*}
\lim_{k\rightarrow\infty}\inf E(h_{k,2})&\geq &C\lim_{k\rightarrow\infty}\inf ||h_{k,2}||_{s}^{2}\\
&\geq & C\lim_{k\rightarrow\infty}\inf ||\rho_{k,2}||_{L^{1}}+O(\epsilon)\geq C(\sigma-\theta_{0})+O(\epsilon),
\end{eqnarray*}
leading to, for $\epsilon>0$ arbitrarily small
\begin{eqnarray*}
I_{\lambda}\geq C(\sigma-\theta_{0})+I_{\lambda}+O(\epsilon),
\end{eqnarray*}
implying the contradiction $\sigma-\theta_{0}\leq 0$. 
\item[(5)] If $\theta>\lambda$, then $F(h_{k,1})>0$ for $k$ large enough. Let
\begin{eqnarray*}
e_{k}=\left(\frac{\lambda}{F(h_{k,1})}\right)^{\frac{1}{2\sigma+2}}.
\end{eqnarray*}
Then $F(e_{k}h_{k,1})=\lambda$ and
$$\lim_{k\rightarrow\infty}e_{k}=\left(\frac{\lambda}{\theta}\right)^{\frac{1}{2\sigma+2}}.$$ Therefore, for $k$ large enough, we have
\begin{eqnarray}
I_{\lambda}\leq E(e_{k}h_{k,1})=e_{k}^{2}E(h_{k,1})<E(h_{k,1}),\label{fnls_2324a}
\end{eqnarray}
and, on the other hand, from Proposition \ref{propos25}(i), (\ref{fnls_2320}), and (\ref{fnls_2321}) there holds
\begin{eqnarray}
\lim_{k\rightarrow\infty}\inf E(h_{k,2})&\geq &C\lim_{k\rightarrow\infty}\inf ||h_{k,2}||_{s}^{2}
\geq  C\lim_{k\rightarrow\infty}\inf ||\rho_{k,2}||_{L^{1}}+O(\epsilon)\nonumber\\
&\geq &C(\sigma-\theta_{0})+O(\epsilon).\label{fnls_2324b}
\end{eqnarray}
Thus, (\ref{fnls_2318b}), (\ref{fnls_2324a}), and (\ref{fnls_2324b}) imply
\begin{eqnarray*}
I_{\lambda}\geq I_{\lambda}+C(\sigma-\theta_{0})+O(\epsilon),
\end{eqnarray*}
for $\epsilon$ arbitrarily small, leading again to the contradiction $\sigma-\theta_{0}\leq 0$.
\end{itemize}
Hence, dichotomy is ruled out. Since vanishing and dichotomy do not hold, then Lemma 1.1 of \cite{Lions} implies that necessarily compactness is satisfied. Let $\epsilon>0$ and
$$P_{k}=\{x\in\mathbb{R}/ |x-y_{k}|\geq R(\epsilon)\}.$$ Using (\ref{fnls_239b}) we have
\begin{eqnarray*}
\frac{1}{2\sigma+2}\int_{P_{k}}\left(v_{n_{k}}^{2}+w_{n_{k}}^{2}\right)^{\sigma+1}dx&\leq &C||(v_{n_{k}},w_{n_{k}})||_{s}^{2\sigma}\left(\int_{P_{k}}\rho_{n_{k}}(x)dx\right)\\
&=&O(\epsilon).
\end{eqnarray*}
Therefore
\begin{eqnarray}
\left|\frac{1}{2\sigma+2}\int_{|x-y_{k}|\leq R}\left(v_{n_{k}}^{2}+w_{n_{k}}^{2}\right)^{\sigma+1}dx-\lambda\right|\leq \epsilon.\label{fnls_2325}
\end{eqnarray}
Let $\widetilde{h}_{n_{k}}=(\widetilde{v}_{n_{k}},\widetilde{w}_{n_{k}})$ with
$$\widetilde{v}_{n_{k}}(x)=v_{n_{k}}(x-y_{k}),\quad
\widetilde{w}_{n_{k}}(x)=w_{n_{k}}(x-y_{k}).$$ Then $\widetilde{h}_{n_{k}}$ is bounded in $H^{s}\times H^{s}$, so there is a subsequence (denoted again by $\widetilde{h}_{n_{k}}$) which converges weakly in $H^{s}\times H^{s}$ to some $(\widetilde{v}_{0},\widetilde{w}_{0})\in H^{s}\times H^{s}$. Since (\ref{fnls_2325}) implies that
\begin{eqnarray*}
\lambda\geq \int_{-R}^{R}\frac{1}{2\sigma+2}\left(\widetilde{v}_{n_{k}}^{2}+\widetilde{w}_{n_{k}}^{2}\right)^{\sigma+1}dx\geq \lambda-\epsilon,
\end{eqnarray*}
for $k$ large enough then, from the compact embedding, \cite{Adams1975}
$$H^{s}(-R,R)\subset L^{\sigma+1}(-R,R), \sigma>0,$$ we have
\begin{eqnarray*}
\lambda\geq \int_{-R}^{R}\frac{1}{2\sigma+2}\left(\widetilde{v}_{0}^{2}+\widetilde{w}_{0}^{2}\right)^{\sigma+1}dx\geq \lambda-\epsilon.
\end{eqnarray*}
As $\epsilon\rightarrow 0$ then $R\rightarrow\infty$, \cite{AnguloS2020}, leading to $\lambda=F(\widetilde{v}_{0},\widetilde{w}_{0})$. In addition, weak lower semicontinuity and invariance under translations of $E$ imply
\begin{eqnarray*}
I_{\lambda}&=&\lim_{k\rightarrow\infty}\inf E(v_{n_{k}},w_{n_{k}})=\lim_{k\rightarrow\infty}\inf E(\widetilde{v}_{n_{k}},\widetilde{w}_{n_{k}})\\
&\geq &E(\widetilde{v}_{0},\widetilde{w}_{0})\geq I_{\lambda}.
\end{eqnarray*}
Therefore $\widetilde{h}=(\widetilde{v}_{0},\widetilde{w}_{0})$ is a solution of the variational problem
\begin{eqnarray*}
\delta E(\widetilde{h})=K\delta F(\widetilde{h}),\quad \lambda=F(\widetilde{h}),
\end{eqnarray*}
for some Lagrange multiplier $K$. This leads to
\begin{eqnarray}
(-\partial_{xx})^{s}\widetilde{v}_{0}+\lambda_{0}^{1}\widetilde{v}_{0}-\lambda_{0}^{2}\widetilde{w}_{0}'&=&K(\widetilde{v}_{0}^{2}+\widetilde{w}_{0}^{2})^{\sigma}\widetilde{v}_{0},\nonumber\\
(-\partial_{xx})^{s}\widetilde{w}_{0}+\lambda_{0}^{1}\widetilde{w}_{0}+\lambda_{0}^{2}\widetilde{v}_{0}'&=&K(\widetilde{v}_{0}^{2}+\widetilde{w}_{0}^{2})^{\sigma}\widetilde{w}_{0}.\label{fnls_2326}
\end{eqnarray}
Multiplying the first equation of (\ref{fnls_2326}) by $\widetilde{v}_{0}$, the second by $\widetilde{w}_{0}$, adding nthe resulting equations and integrating on $\mathbb{R}$ lead to
\begin{eqnarray*}
2E(\widetilde{h})=2(\sigma+1)K F(\widetilde{h}).
\end{eqnarray*}
Therefore $K=I_{\lambda}/\lambda(\sigma+1)>0$. Defining
$$(v_{0},w_{0})=K^{\alpha}(\widetilde{v}_{0},\widetilde{w}_{0}),\quad \alpha=\frac{1}{2\sigma},$$ then it holds that $(v_{0},w_{0})$ is a solution of (\ref{fnls22_2}). This completes the proof of the first part of the following theorem.

\begin{theorem}
\label{theor26} Under the conditions of Lemma \ref{lem24}, there is a solution $(v_{0},w_{0})\in H^{s}\times H^{s}$ of (\ref{fnls22_2}) which satisfies $v_{0}, w_{0}\in H^{\infty}$.
\end{theorem}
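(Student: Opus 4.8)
The existence part being established above, what remains is the smoothness claim $v_{0},w_{0}\in H^{\infty}$, and the plan is an elliptic bootstrap driven by the coercivity of $Q$ from Lemma \ref{lem24}. First I would recast the system (\ref{fnls22_2}) as
\begin{equation*}
Q\begin{pmatrix}v_{0}\\w_{0}\end{pmatrix}=N(v_{0},w_{0}),\qquad N(v,w)=(v^{2}+w^{2})^{\sigma}\begin{pmatrix}v\\w\end{pmatrix},
\end{equation*}
using (\ref{fnls_234})--(\ref{fnls_235}). The matrix symbol $\widehat{Q}(\xi)$ is Hermitian with eigenvalues $\lambda_{\pm}(\xi)=\lambda_{0}^{1}+|\xi|^{2s}\pm\lambda_{0}^{2}\xi$, which under (\ref{fnls_236b}) are bounded below by a positive constant and comparable to $1+|\xi|^{2s}$; hence $Q$ is elliptic of order $2s$, and $Q^{-1}$ maps $H^{r}\times H^{r}$ continuously into $H^{r+2s}\times H^{r+2s}$ for every $r\geq 0$ (the off-diagonal part of $\widehat{Q^{-1}}$ decays even faster, since $2s>1$). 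Thus the constructed solution satisfies $(v_{0},w_{0})=Q^{-1}N(v_{0},w_{0})$, and any gain of regularity for $N(v_{0},w_{0})$ is transferred, with a surplus of $2s$ derivatives, to $(v_{0},w_{0})$.

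Next I would run the iteration. Since $s>1/2$ we have $(v_{0},w_{0})\in H^{s}\times H^{s}\subset L^{\infty}\times L^{\infty}$; the map $N$ is $C^{1}$ on $\mathbb{R}^{2}$ with $N(0)=0$ for every $\sigma>0$, so, because $s\leq 1$, a Gagliardo-seminorm (equivalently Kato--Ponce, \cite{KP,KPV}) composition estimate gives $\|N(v_{0},w_{0})\|_{s}\leq C(\|(v_{0},w_{0})\|_{L^{\infty}})\,\|(v_{0},w_{0})\|_{s}$, whence $(v_{0},w_{0})\in H^{3s}\times H^{3s}$ and in particular $(v_{0},w_{0})\in C^{1}$. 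Thereafter, whenever $(v_{0},w_{0})\in H^{r}\times H^{r}$ with $r>1/2$, a Moser-type composition estimate $\|N(v_{0},w_{0})\|_{r}\leq C(\|(v_{0},w_{0})\|_{L^{\infty}})\,\|(v_{0},w_{0})\|_{r}$ --- valid as soon as $N$ is of class $C^{\lceil r\rceil}$ on a neighbourhood of the range of $(v_{0},w_{0})$, and using that $H^{r}$ is an algebra for $r>1/2$ --- yields $(v_{0},w_{0})\in H^{r+2s}\times H^{r+2s}$; iterating raises the Sobolev index by $2s$ at each stage and produces $(v_{0},w_{0})\in H^{k}$ for all $k\in\mathbb{N}$, that is $(v_{0},w_{0})\in H^{\infty}\times H^{\infty}$.

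The hard part is securing, at every level, the hypothesis $N\in C^{\lceil r\rceil}$ needed for the composition step. When $\sigma\in\mathbb{N}$ this is free: $N$ is a polynomial, so the composition estimate is unconditional and $v_{0},w_{0}\in H^{\infty}$ follows at once. For non-integer $\sigma$, however, $N(v,w)=(v^{2}+w^{2})^{\sigma}(v,w)$ is only of finite smoothness class at points where $v^{2}+w^{2}=0$, so the iteration saturates unless one first shows that the modulus $\rho=\sqrt{v_{0}^{2}+w_{0}^{2}}$ of the constructed solution is strictly positive; once that is known, $N$ is $C^{\infty}$ along the solution and the bootstrap is unobstructed. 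I would obtain this positivity in the way that is standard for Schr\"{o}dinger-type profiles: using the gauge and translation invariances of Lemma \ref{lemmaNR2} to reduce to a representative whose modulus solves the scalar profile equation (\ref{fnls22_5}), $(M+a)\rho=\rho^{2\sigma+1}$, and then invoking the strong maximum principle for the nonlocal elliptic operator $M+a$ (in the spirit of \cite{FrankL,HongS2017}) to conclude $\rho>0$ on $\mathbb{R}$. This non-vanishing step, together with the companion decay properties of $(v_{0},w_{0})$ that keep $N(v_{0},w_{0})$ in $L^{2}$ at every order of differentiation, is where the genuine work lies; the elliptic bootstrapping itself, and the starting point furnished by $s>1/2$, are routine.
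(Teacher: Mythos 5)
Your argument is, in its main line, the same as the paper's: rewrite (\ref{fnls22_2}) as $Q(v_{0},w_{0})^{T}=G(v_{0},w_{0})$, observe that $G(v_{0},w_{0})\in L^{2}\times L^{2}$ because $s>1/2$ gives $H^{s}\subset L^{\infty}$, use the two-sided bounds (\ref{fnls_238}) on the eigenvalues of $\widehat{Q}(\xi)$ to see that $Q^{-1}$ gains $2s$ derivatives, and bootstrap. The paper states exactly this and leaves the rest at the level of ``a bootstrap argument applied to (\ref{fnls_2327})''; your treatment of the inversion step (including the off-diagonal entries of $\widehat{Q}^{-1}$) and of the first iterations ($L^{2}\to H^{2s}$, then a $C^{1}$ composition estimate to reach $H^{3s}$) is a faithful, more detailed version of the same route.

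Where you go beyond the paper is in flagging that for non-integer $\sigma$ the map $(v,w)\mapsto(v^{2}+w^{2})^{\sigma}(v,w)$ is only finitely differentiable at the origin, so the Moser-type composition estimates needed at high Sobolev order are not automatic. That is a real subtlety which the paper's one-line bootstrap glosses over. However, your proposed remedy --- proving $\rho=\sqrt{v_{0}^{2}+w_{0}^{2}}>0$ by a maximum principle and concluding that $N$ is then $C^{\infty}$ ``along the solution'' --- does not close this gap. Since $(v_{0},w_{0})\in H^{s}\times H^{s}$ with $s>1/2$, the profile tends to zero at infinity (indeed Theorem \ref{theor27} gives $\rho(x)=O(|x|^{-(2s+1)})$ for the subfamily (\ref{fnls22_7b})), so the closure of the range of $(v_{0},w_{0})$ contains the origin no matter how strictly positive $\rho$ is pointwise; nonvanishing only yields interior ($H^{\infty}_{\mathrm{loc}}$) regularity. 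Membership in the global spaces $H^{k}(\mathbb{R})$ for every $k$ requires balancing the pointwise blow-up of $F^{(j)}(v_{0},w_{0})\sim\rho^{2\sigma+1-j}$ as $|x|\to\infty$ (for $j>2\sigma+1$) against the decay of the derivatives of the profile, i.e.\ quantitative weighted decay estimates at infinity rather than strict positivity. So either restrict the unconditional $H^{\infty}$ claim to integer $\sigma$, where $G$ is polynomial and the algebra property of $H^{r}$, $r>1/2$, finishes the bootstrap exactly as you describe, or supply those decay estimates; the positivity step as you state it is not the missing ingredient.
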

\begin{proof}
It only remains to prove the last statement. Note that (\ref{fnls22_2}) can be written as
\begin{eqnarray}
Q\begin{pmatrix}v\\w\end{pmatrix}=G(v,w)=(v^{2}+w^{2})^{\sigma+1}\begin{pmatrix}v\\w\end{pmatrix},\label{fnls_2327}
\end{eqnarray}
where $Q$ is defined in (\ref{fnls_235}). A similar argument to that used to prove (\ref{fnls_239}) implies that $G(v,w)\in L^{2}\times L^{2}$. Then, using Lemma \ref{lem24}, we have $(v,w)\in H^{2s}\times H^{2s}$; then the result follows from a bootstrap argument applied to (\ref{fnls_2327}).
\end{proof}
\begin{remark}
\label{rema231}
Note that, for fixed $\lambda_{0}^{1}>0$, if $(\lambda_{0}^{2},v_{0},w_{0})$ is a solution of (\ref{fnls22_2}), then $(-\lambda_{0}^{2},w_{0},v_{0})$ is a solution. This implies that Theorem \ref{theor26} is valid if in Lemma \ref{lem24} we assume $0<|c_{s}|<c(\lambda_{0}^{1})$ instead of (\ref{fnls_236b}) 
\end{remark}

\subsubsection{The particular case (\ref{fnls22_7b})}
\label{sec231}
The case of the subfamily (\ref{fnls22_7b}), where $\theta(x)=Ax$ and $\rho$ satisfies (\ref{fnls22_5}), deserves some comments. Indeed, the existence of solution is ensured by Theorem \ref{theor26}. But an alternative for the proof can be given from (\ref{fnls22_5}), Lemma \ref{lemmaNR3}, and the minimization problem
\begin{eqnarray}
\theta_{\lambda}=\inf\{J(u): u\in H^{s}(\mathbb{R}): \int_{\mathbb{R}}u^{2\sigma+2}dx=\lambda\},\quad \lambda>0,\label{fnls_258c1}
\end{eqnarray} 
where
\begin{eqnarray}
J(u)=\int_{\mathbb{R}}u(M+a)udx.\label{fnls_258c2}
\end{eqnarray}
Note that the symbol (\ref{fnls22_4a}) can be written as $r(\xi)=l(\xi+A)$, where
\begin{eqnarray}
l(x)=|x|^{2s}-\lambda_{0}^{2}x+\lambda_{0}^{1},\quad x\in\mathbb{R}.\label{fnls_258b}
\end{eqnarray}
A similar study to that made in Lemma \ref{lem24} leads to the existence of positive constants $\alpha_{j},\beta_{j}, j=0,1$, such that
\begin{eqnarray*}
\alpha_{0}+\alpha_{1}|x|^{2s}\leq l(x)\leq \beta_{0}+\beta_{1}|x|^{2s},\quad x\in \mathbb{R}.
\end{eqnarray*}
Then, the existence of $\rho$ can be alternatively derived from \cite{Weinstein1987}. On the other hand, (\ref{fnls_258b}) and Proposition 1.1(iii) of \cite{FrankL} determine the asymptotic decay of the waves:
\begin{theorem}
\label{theor27} Assume that $s\in (1/2,1)$, $\lambda_{0}^{1}>0$, $c(\lambda_{0}^{1})$ is as in (\ref{fnls_236}) and $0<|c_{s}|<c(\lambda_{0}^{1})$. Then there is a solution $\rho$ of (\ref{fnls22_5}). Furthermore, there exists a constant $C>0$ such that
\begin{eqnarray}
|x|^{2s+1}\rho(x)\leq C,\quad x\in\mathbb{R}.\label{fnls_2328}
\end{eqnarray}
\end{theorem}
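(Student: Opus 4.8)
The plan is to obtain the existence of $\rho$ from the variational problem (\ref{fnls_258c1})--(\ref{fnls_258c2}) and then to read off the decay (\ref{fnls_2328}) by inverting the linear part of (\ref{fnls22_5}) and appealing to the Fourier-analytic estimates behind Proposition 1.1(iii) of \cite{FrankL}. For the existence one writes the symbol $r$ of $R_{A}$ as $r(\xi)=l(\xi+A)$ with $l$ as in (\ref{fnls_258b}), so that, by the two-sided bound $\alpha_{0}+\alpha_{1}|x|^{2s}\leq l(x)\leq\beta_{0}+\beta_{1}|x|^{2s}$ obtained as in Lemma \ref{lem24}, the functional $J$ in (\ref{fnls_258c2}) is well defined, nonnegative and equivalent to $\|\cdot\|_{s}^{2}$; the scalar analogue of Proposition \ref{propos25} then holds, and a Concentration--Compactness argument as in Section \ref{sec23} --- or, more directly, \cite{Weinstein1987} --- yields a minimizer of $\theta_{\lambda}$ which, after the usual rescaling by the Lagrange multiplier, solves (\ref{fnls22_5}) (the existence being in any case already contained in Theorem \ref{theor26}). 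The same two-sided bound makes $R_{A}$ an isomorphism from $H^{\mu+2s}$ onto $H^{\mu}$ for every $\mu\geq 0$, so the bootstrap of Theorem \ref{theor26} applied to $R_{A}\rho=\rho^{2\sigma+1}$ gives $\rho\in H^{\infty}$; in particular $\rho$ and $\rho^{2\sigma+1}$ are bounded and tend to $0$ at infinity.

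For the decay I would invert the linear part and write (\ref{fnls22_5}) as $\rho=\mathcal{K}\ast\rho^{2\sigma+1}$, where $\mathcal{K}$ is the convolution kernel of $R_{A}^{-1}$, with $\widehat{\mathcal{K}}(\xi)=1/r(\xi)$. Two features of $\psi:=1/r$ matter: (a) $\psi(\xi)\sim|\xi|^{-2s}$ at infinity, so that --- \emph{precisely because $s>1/2$} --- $\psi\in L^{1}(\mathbb{R})$ and $\mathcal{K}$ is bounded and continuous; and (b) $\psi$ is $C^{\infty}$ away from the single point $\xi=-A$, where, since $1<2s<2$, it is $C^{1}$ but not $C^{2}$, its singular part there being a constant multiple of $|\xi+A|^{2s}$ modulo a smoother, integrable remainder. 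By the kernel estimate underlying Proposition 1.1(iii) of \cite{FrankL} --- passing from the symbol $|\xi|^{2s}+1$ of $(-\partial_{xx})^{s}+1$ first to $l(\xi)=|\xi|^{2s}-\lambda_{0}^{2}\xi+\lambda_{0}^{1}$ (still uniformly positive and $\sim|\xi|^{2s}$, so only the constants change, as controlled by the two-sided bound) and then to $l(\xi+A)=r(\xi)$ (which merely multiplies the kernel by the unimodular factor $e^{-iAx}$) --- property (b) forces $|\mathcal{K}(x)|\leq C\,|x|^{-(2s+1)}$ for $|x|\geq 1$, while the smoother remainder contributes a faster-decaying piece of $\mathcal{K}$.

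It then remains to transfer this rate to $\rho$ by a bootstrap on the identity $\rho=\mathcal{K}\ast\rho^{2\sigma+1}$. Starting from the fact that $\rho$ and $\rho^{2\sigma+1}$ are bounded and vanish at infinity, one first obtains an algebraic decay $\rho(x)\leq C\langle x\rangle^{-\gamma_{0}}$ for some $\gamma_{0}>0$; feeding $\rho(x)\leq C\langle x\rangle^{-\gamma}$ back into the equation and splitting the convolution over $\{|y|\leq|x|/2\}$ and $\{|y|\geq|x|/2\}$ produces $\rho(x)\leq C\langle x\rangle^{-\min\{2s+1,\,(2\sigma+1)\gamma\}}$, and since $2\sigma+1>1$ the exponent strictly increases at each step until it saturates at $2s+1$, which is (\ref{fnls_2328}). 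I expect the main obstacle to lie exactly in this block: establishing the sharp pointwise bound on $\mathcal{K}$ --- i.e. turning the limited regularity of the symbol at $\xi=-A$ into the precise rate $2s+1$, and checking that the argument of \cite{FrankL} for $(-\partial_{xx})^{s}+1$ survives the perturbations (a)--(b) --- and then making the iteration close, including the integrability of $\rho^{2\sigma+1}$ needed at each step, so that the faster decay of the nonlinearity propagates the optimal rate rather than stalling at an intermediate exponent. Once the two-sided bound of Lemma \ref{lem24} is available, everything else is routine.
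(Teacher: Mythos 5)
Your proposal follows essentially the same route as the paper: the paper obtains existence from the equivalence of $J$ with the squared $H^{s}$ norm (via the two-sided bound on $l$) together with \cite{Weinstein1987} (or Theorem \ref{theor26}), and obtains the decay (\ref{fnls_2328}) by observing that $r(\xi)=l(\xi+A)$ and invoking Proposition 1.1(iii) of \cite{FrankL} as a black box. Your kernel-inversion and bootstrap argument is precisely the mechanism behind that cited result, so you are unpacking the citation rather than taking a genuinely different path.
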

In addition, we observe that (\ref{fnls_258c1}), (\ref{fnls_258c2}) is equivalent, by rescaling, to the problem
\begin{eqnarray}
m=\min\{\Lambda(f):f\in H^{s}(\mathbb{R}), f\neq 0\},\label{fnls_258d1}
\end{eqnarray}
where
\begin{eqnarray}
\Lambda(f)=\frac{J(f)}{\left(\int_{\mathbb{R}}f^{2\sigma+2}dx\right)^{\frac{1}{\sigma+1}}},\label{fnls_258d2}
\end{eqnarray}
in the following sense, \cite{ChenB}: Note first that if $f$ is a minimizer for (\ref{fnls_258d1}), (\ref{fnls_258d2}), then it is also a minimizer for (\ref{fnls_258c1}), (\ref{fnls_258c2}) with
$$\theta_{\lambda}=\lambda^{\frac{1}{\sigma+1}}m,\quad \lambda=\int_{\mathbb{R}}f^{2\sigma+2}dx.$$ On the other hand, it holds that
$\Lambda'(f)h=0$ for any $h\in H^{s}(\mathbb{R})$. After some computations, this implies
\begin{eqnarray*}
(M+a)f=\frac{J(f)}{\int_{\mathbb{R}}f^{2\sigma+2}dx}f^{2\sigma+1}.
\end{eqnarray*}
Therefore, the rescaling $\rho=Cf$ with
$$C=\frac{m^{\frac{1}{2\sigma}}}{\lambda^{\frac{1}{2\sigma+2}}},$$ is a solution of (\ref{fnls22_5}). The equivalence can be used to ensure the existence of an even solution of (\ref{fnls22_5}) from the arguments shown in \cite{Albert,ChenB}, by searching for a minimizer of (\ref{fnls_258c1}), (\ref{fnls_258c2}) with positive Fourier transform.


\section{Numerical generation of solitary waves}\label{sec3}
In this section some additional properties of the solitary waves will be studied by computational means. To this end, the numerical procedure to compute approximate solitary wave profiles will be briefly described and its performance will be checked in a first group of numerical experiments. The accuracy of the method will give us some confidence to study computationally some properties of the waves concerning, among others, even character, asymptotic decay, and the speed-amplitude relation.
\subsection{Approximation to the profiles}
\label{sec31}
Contrary to the classical case (\ref{fnls22_7b}) with $s=1$, \cite{DuranS2000}, explicit formulas for the solitary-wave profiles $(v_{0},w_{0})$, solutions of (\ref{fnls22_2}), or solutions $\rho$ of (\ref{fnls22_5}), are not known in general. In order to study additional properties and dynamics of the resulting solitary waves, a numerical procedure to compute approximations is here introduced. The approach consists of solving iteratively (\ref{fnls22_2}), written in the form (\ref{fnls_2327}). This contains a nonsingular, linear part (given by the operator $Q$) and a nonlinear term $G$, homogeneous of degree $2\sigma+1$. This homogeneous character prevents the use of the classical fixed-point iteration
\begin{eqnarray}
Qz^{[\nu+1]}=G(z^{[\nu]}),\quad \nu=0,1,\ldots,\label{fnls_310}
\end{eqnarray}
where $z^{[\nu]}=(v^{[\nu]},w^{[\nu]}), \nu=0,1,\ldots,$ and for some initial iteration $z^{[0]}$. The reason is that, from Euler's Homogeneous Function Theorem
$$G'(v,w)\begin{pmatrix}v\\w\end{pmatrix}=(2\sigma+1)G(v,w),$$ which implies, if $(v_{0},w_{0})$ is a solution of (\ref{fnls22_2}), that
\begin{eqnarray*}
Q^{-1}G'(v_{0},w_{0})\begin{pmatrix}v_{0}\\w_{0}\end{pmatrix}=(2\sigma+1)Q^{-1}G(v_{0},w_{0})=(2\sigma+1)\begin{pmatrix}v_{0}\\w_{0}\end{pmatrix}.
\end{eqnarray*}
Therefore, $\lambda=2\sigma+1>1$ becomes one of the eigenvalues of the corresponding iteration operator of (\ref{fnls_310}) at the solitary-wave profile $(v_{0},w_{0})$. 

An efficient alternative is given by the Petviashvili method, \cite{Petv1976}. This is formulated as
\begin{eqnarray}
m_{\nu}&=&\frac{\langle Qz^{[\nu]},z^{[\nu]}\rangle}{\langle G(z^{[\nu]}),z^{[\nu]}\rangle},\nonumber\\
Qz^{[\nu+1]}&=&m_{\nu}^{\alpha}G(z^{[\nu]}),\quad \nu=0,1,\ldots,\label{fnls_311}
\end{eqnarray}
where $\langle\cdot,\cdot\rangle$ is given by (\ref{fnls_234b}) and $\alpha\in (1,(2\sigma+2)/2\sigma)$, with $\alpha=(2\sigma+1)/2\sigma$ as optimal choice, \cite{pelinovskys}. The iteration (\ref{fnls_311}) is characterized by the so-called stabilizing factor $m_{\nu}$. Compared to the classical fixed-point iteration, this factor modifies the spectrum of the resulting iteration operator in such a way that, from (\ref{fnls_310}) to (\ref{fnls_311}), the \lq harmful\rq\ eigenvalue $\lambda=2\sigma+1$ is transformed to some below one in magnitude for (\ref{fnls_311}) (zero in the case of the optimal choice of $\alpha$), and the rest of the spectrum does not change, \cite{AlvarezD2014}. Not being the purpose of the present paper, it is worth mentioning that this property seems to enable for obtaining convergence results for (\ref{fnls_311}) in a local, orbital sense, \cite{AlvarezD2014}. For the particular case $\theta(x)=Ax$, the Petviashvili method is applied to obtain approximations to the solutions $\rho$ of (\ref{fnls22_5}). In this case, the formulation is
\begin{eqnarray}
m_{\nu}&=&\frac{( (M+a)\rho^{[\nu]},\rho^{[\nu]})}{((\rho^{[\nu]})^{2\sigma+1},\rho^{[\nu]})},\nonumber\\
(M+a)\rho^{[\nu+1]}&=&m_{\nu}^{\alpha}(\rho^{[\nu]})^{2\sigma+1},\quad \nu=0,1,\ldots,\label{fnls_312}
\end{eqnarray} 
In practice, the procedures (\ref{fnls_311}) and (\ref{fnls_312}) must be applied in the form of some discrete version. Due to the localized character of the profiles, this is typically done by discretizing the equations on a long enough interval $(-l,l)$ with periodic boundary conditions and corresponding approximations of the linear and nonlinear operators. By way of illustration, in the case of (\ref{fnls_2327}), the discretization would have has the form
\begin{eqnarray}
Q_{h}z_{h}=G(z_{h}),\label{fnls_313}
\end{eqnarray}
where $z_{h}=(v_{h},w_{h}),$ $v_{h}=(v_{h,0},\ldots,v_{h,N-1})^{T},$ $w_{h}=(w_{h,0},\ldots,w_{h,N-1})^{T}$ denote the approximations to the values of the solution $v$ and $w$, respectively, at the grid points $x_{j}=-l+jh,$ $j=0,\ldots,N-1,$ $h=2l/N$. It is assumed that $N$ is even and $v_{h},w_{h}$ are extended as periodic functions $v_{h}=(v_{h,j})_{j\in\mathbb{Z}}$,$w_{h}=(w_{h,j})_{j\in\mathbb{Z}}$, with $v_{h,j+N}=v_{h,j}, w_{h,j+N}=w_{h,j}$, defined on a extended uniform grid $x_{j}=-l+jh, j\in\mathbb{Z}$. On the other hand, $Q_{h}$ and $G_{h}$ are, respectively, some discretizations of the operator $Q$ and the nonlinear term $G$. The discretization of (\ref{fnls22_5}) can be made in a similar way. Then, the Petviashvili method is applied to the discrete system (\ref{fnls_313}) using the corresponding Euclidean inner product.

The nonlocal character of the linear operators in (\ref{fnls22_2}) and (\ref{fnls22_5}) suggests to consider a Fourier collocation approximation in (\ref{fnls_313}) (and in the corresponding discretization of (\ref{fnls22_5})) and to implement the Petviashvili iterations in the Fourier space, that is, for the discrete Fourier coefficients of the approximations. The resulting methods were experimentally shown to be convergent (see the computations in section \ref{sec32}) and, in order to accelerate the convergence, they were performed along with the Minimal Polynomial Extrapolation (MPE) technique, (cf. e.~g. \cite{sidi,AlvarezD2016} and references therein). A brief description for the case of (\ref{fnls_311}) follows. Let $\nu\geq 0$ and let $\kappa\in \mathbb{N}\cup \{0\}$ small as a number of extrapolation steps. Let $u^{[\nu]}=\Delta z^{[\nu]}=z^{[\nu+1]}-z^{[\nu]}$. We solve the minimization problem in the corresponding norm $||\cdot ||$
\begin{eqnarray}\label{fnls_314}
\min_{c_{0},\ldots,c_{\kappa-1}}||\sum_{i=0}^{\kappa-1}c_{i}u^{[\nu+i]}+u^{[\nu+\kappa]}||,
\end{eqnarray}
take $c_{\kappa}=1$ and compute the extrapolation
\begin{eqnarray}\label{fnls_315}
z_{\nu,\kappa}=\sum_{i=0}^{\kappa}\gamma_{i}z^{[\nu+i]},\quad \gamma_{i}=\frac{c_{i}}{\sum_{j=0}^{\kappa}c_{j}},\quad i=0,\ldots,\kappa.
\end{eqnarray}
where we assume $\sum_{i=0}^{\kappa}c_{i}\neq 0$. The quantity $mw=\kappa+1$ is called the width of extrapolation. For a discussion on the choice of $\kappa$, the implementation of (\ref{fnls_314}), (\ref{fnls_315}), and convergence results, see e.~g. \cite{sidifs,smithfs,sidi}. (See also \cite{AlvarezD2016} for the application of acceleration techniques to compute solitary-wave profiles.)
\subsection{Accuracy of the approximate waves}
\label{sec32}
In this section some numerical experiments are shown to illustrate the accuracy of the procedure. This is controlled by iterating while the corresponding Euclidean norm of discrete versions of the residuals $Qz^{[\nu]}-G(z^{[\nu]})$ and $(M+a)\rho^{[\nu]}-(\rho^{[\nu]})^{2\sigma+1}$ are above some prefixed tolerance.

\begin{figure}[htbp]
\centering
\centering
\subfigure[]
{\includegraphics[width=6.2cm]{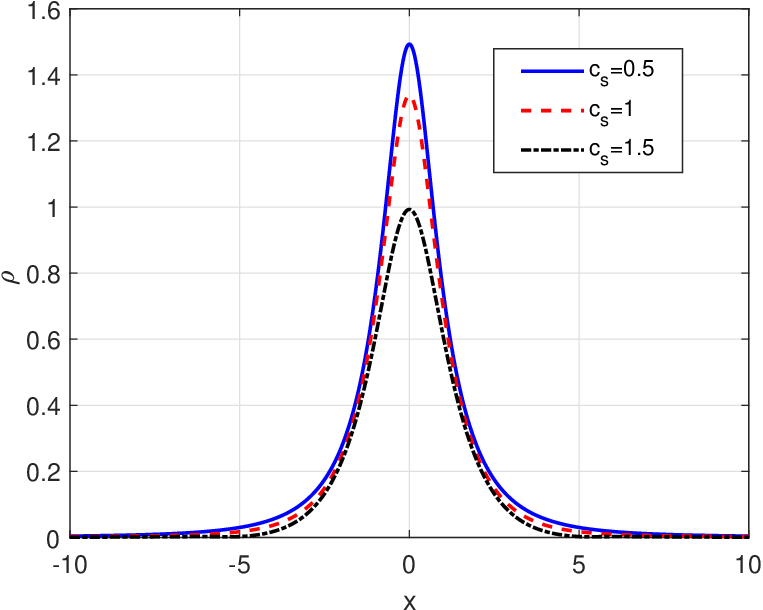}}
\subfigure[]
{\includegraphics[width=6.2cm]{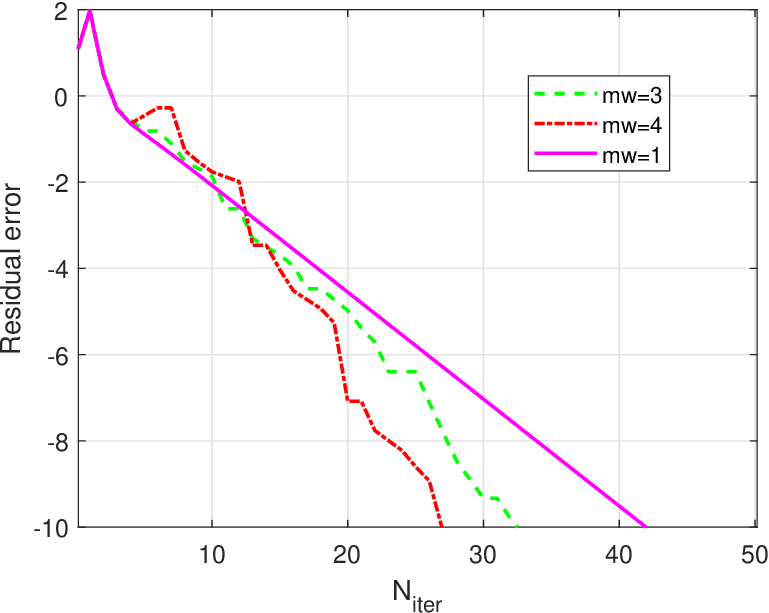}}
\caption{Numerical generation of solitary waves. Iteration (\ref{fnls_312}) with $\sigma=1, s=3/4$ y $\lambda_0^1=1$. (a) $\rho$ numerical profiles for several speeds; (b) residual error vs. number of iterations for the case $c_{s}=1$ and several values of the width of extrapolation $mw$.}
\label{NR_ADfig1}
\end{figure}
For these experiments of accuracy, we focus on the iteration (\ref{fnls_312}). Figure \ref{NR_ADfig1}(a) shows the approximate profile corresponding to $\sigma=1, s=3/4,\lambda_0^1=1$ and several speeds $c_{s}=\lambda_{0}^{2}$ below the limit $c(\lambda_{0}^{1})\approx 1.8899$ given by (\ref{fnls_236}). The convergence of the procedure and the effect of the extrapolation technique are observed from Figure \ref{NR_ADfig1}(b), which displays the Euclidean norm of the residual error as function of the number of iterations and for several values of the width $mw$ for the approximate profile obtained with $c_{s}=1$. The MPE method accelerates the convergence in the sense that the iterations required for the magnitude of the residual error to be below a fixed value are reduced. This is more clearly observed when comparing the results for $mw=1$ (no acceleration) and $mw=3$. For the experiments performed in our study, the optimal values of the width seem to be $mw=3$ and $4$. Larger values do not provide a relevant improvement, compared to the computational time required by the extrapolation (cf. \cite{smithfs,AlvarezD2016} for a discussion on the choice of $mw$).

\begin{figure}[htbp]
\centering
\centering
\subfigure[]
{\includegraphics[width=6.2cm]{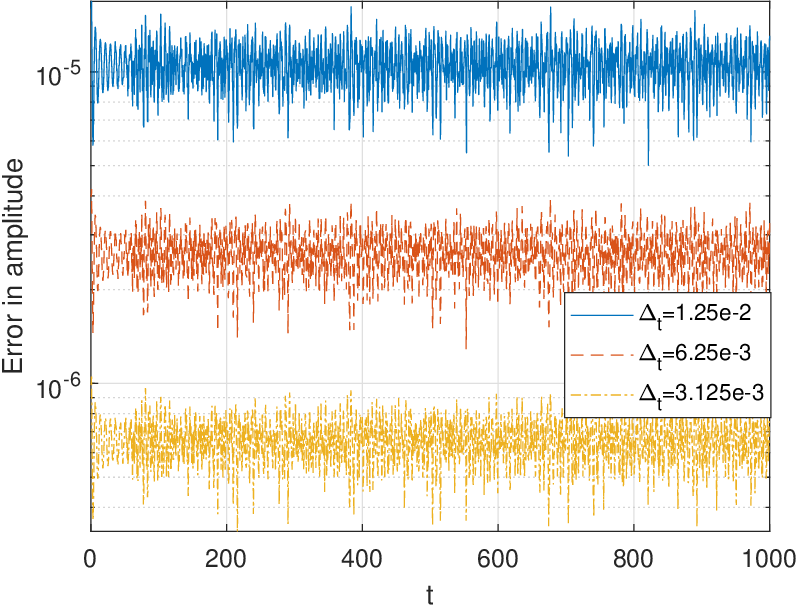}}
\subfigure[]
{\includegraphics[width=6.2cm]{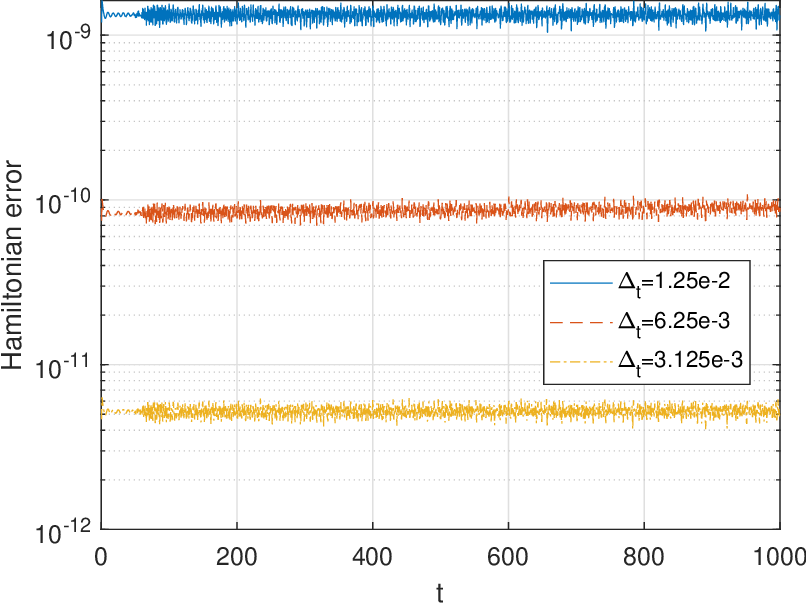}}
\caption{Numerical approximation of  (\ref{fnls1d}) with $\sigma=1, s=3/4$, initial condition given by the approximate profile from $\lambda_0^1=1, \lambda_{0}^{2}=1$. (a) Evolution of the error in the amplitude; (b) Hamiltonian error as function of time.}
\label{NR_ADfig2}
\end{figure}
A second experiment confirming the accuracy of the iteration is concerned  with the solitary-wave character of the approximate profiles. This consists of approximating the periodic ivp of  (\ref{fnls1d}) on a long enough interval with a fully discrete scheme based on a Fourier collocation spectral method for the approximation in space and the implicit midpoint rule as time integrator. The initial condition at the collocation points $x$ is of the form $\rho(x) e^{i\theta(x)}$, where $\rho$ is obtained from (\ref{fnls_312}) and $\theta=Ax$ with $A$ given by Lemma \ref{lemmaNR3} from the choice of $\lambda_{0}^{2}$. Then the evolution of the corresponding numerical solution is monitored from several points of view. Two of them are illustrated in Figure \ref{NR_ADfig2}. On the left we show the evolution of the error between the amplitude of the numerical solution and that of the initial condition, with spatial stepsize $h=6.25\times 10^{-2}$ and for several temporal stepsizes $\Delta t$. Note that the errors remain small and do not grow with time. (The same experiment but with respect to the speed $\lambda_{0}^{2}$ was also made, with similar results. The computation of these values was standard, cf. e.~g. \cite{AlvarezD2016} and references therein.) On the other hand, in Figure \ref{NR_ADfig2}(b), the evolution of the error with respect to the corresponding discrete version of the Hamiltonian (\ref{fnls3c}) shows a high level of preservation of the energy. (A similar behaviour was observed when computing the corresponding errors with respect to the quantities (\ref{fnls3a}) and (\ref{fnls3b}).)
\subsection{Additional properties of the waves}
\label{sec33}
The numerical experiments of section \ref{sec32} give confidence on the accuracy of the computed profiles and enable to study numerically additional properties of the waves.
\begin{figure}[htbp]
\centering
\centering
\subfigure[]
{\includegraphics[width=6.2cm]{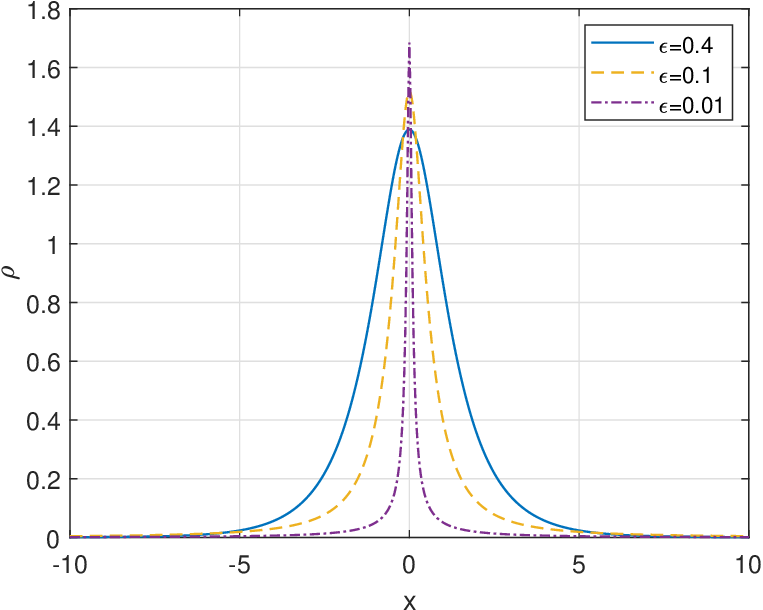}}
\subfigure[]
{\includegraphics[width=6.2cm]{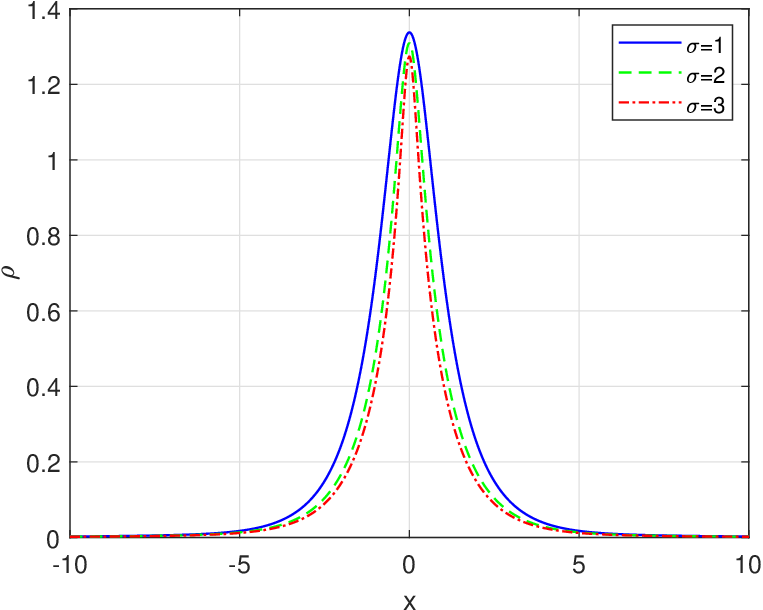}}
\caption{(a) Approximate $\rho$ profiles for $\sigma=1, \lambda_{0}^{1}=1,\lambda_{0}^{2}=0.75, s=0.5+\epsilon$ for several values of $\epsilon$. (b) Approximate $\rho$ profiles for $ s=3/4, \lambda_{0}^{1}=1, \lambda_{0}^{2}=1$ for several values of $\sigma$.}
\label{NR_ADfig3}
\end{figure}
The first experiments illustrate the influence of the fractional parameter $s$ and the parameter of nonlinearity $\sigma$ in the generation of the profiles. (Recall that both appear in the formula (\ref{fnls_236}) of the limiting value of the speed $c(\lambda_{0}^{1})$.) Figure \ref{NR_ADfig3}(a) shows the approximations of $\rho$ obtained from (\ref{fnls_312}) with  $\sigma=1, \lambda_{0}^{1}=1,\lambda_{0}^{2}=0.75, s=0.5+\epsilon$ for several values of $\epsilon$. (The value of the speed $\lambda_{0}^{2}$ always satisfies (\ref{fnls_236b}).) We observe that close to the limiting value $s=1/2$, the profiles are narrower, becoming smoother as $\epsilon$ is increasing. On the other hand, in Figure \ref{NR_ADfig3}(b) approximate profiles computed with  $ s=3/4, \lambda_{0}^{1}=1, \lambda_{0}^{2}=1$ and several values of $\sigma$ are compared. Note that the amplitude seems to decrease as $\sigma$ grows. 

\begin{figure}[htbp]
\centering
\centering
\subfigure[]
{\includegraphics[width=6.2cm]{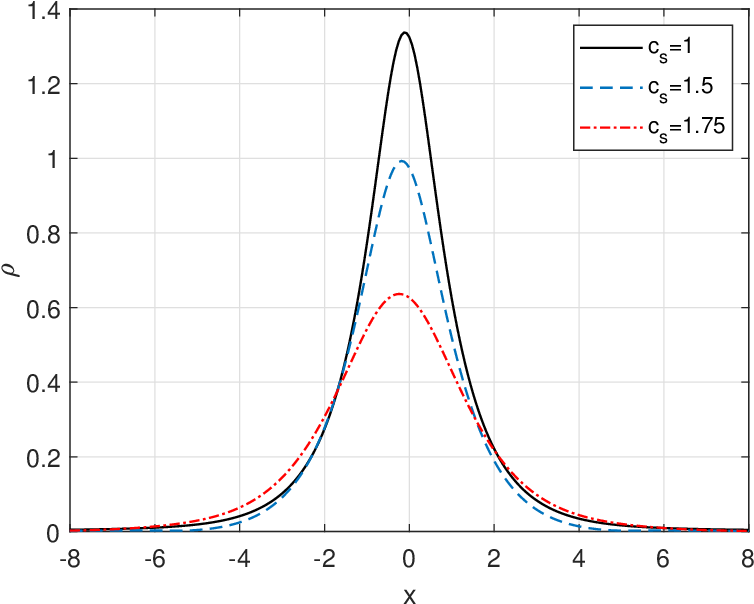}}
\subfigure[]
{\includegraphics[width=6.2cm]{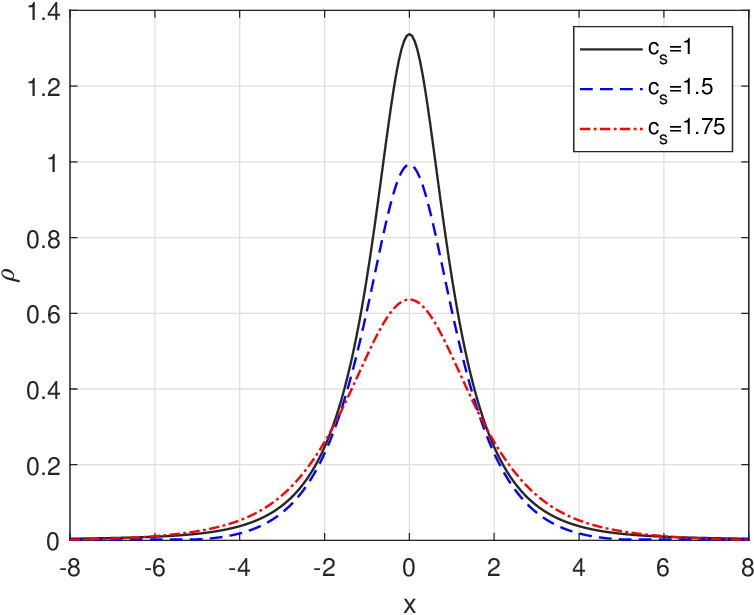}}
\caption{Approximate profiles for $\sigma=1, s=3/4, \lambda_{0}^{1}=1$. (a) $\rho=\sqrt{v^{2}+w^{2}}$ with $(v,w)$ solution of (\ref{fnls_311}) with initial iteration (\ref{fnls_331}) and $\theta(x)=x^{2}$;  (b) solution $\rho$ of (\ref{fnls_312}) with initial iteration (\ref{fnls_331}) and $\theta(x)=Ax$.}
\label{NR_ADfig4}
\end{figure}

On the other hand, the even character of the profiles does not seem to be guaranteed in general. This is confirmed by Figure \ref{NR_ADfig4}(a), which shows the magnitude $\rho=\sqrt{v^{2}+w^{2}}$ with $(v,w)$ solution of (\ref{fnls_311}) with initial iteration
\begin{eqnarray}
v^{[0]}(x)={\rm sech}(x)\cos{\theta(x)},\quad 
w^{[0]}(x)={\rm sech}(x)\sin{\theta(x)},\label{fnls_331}
\end{eqnarray}
and $\theta(x)=x^{2}$. The profiles can be compared with those shown in Figure \ref{NR_ADfig4}(b), corresponding to (\ref{fnls_312}) and initial iteration (\ref{fnls_331}) with $\theta(x)=Ax$, $A$  from Lemma \ref{lemmaNR3}. In this last case, recall that the existence of an even solution is ensured by the arguments explained in section \ref{sec231}. We observe that all the experiments performed for the preparation of this paper gave even approximate profiles. This fact and the results of section \ref{sec231} would imply that uniqueness of solution of (\ref{fnls22_5}) up to translations appears likely.

\begin{figure}[htbp]
\centering
\centering
\subfigure[]
{\includegraphics[width=6.2cm]{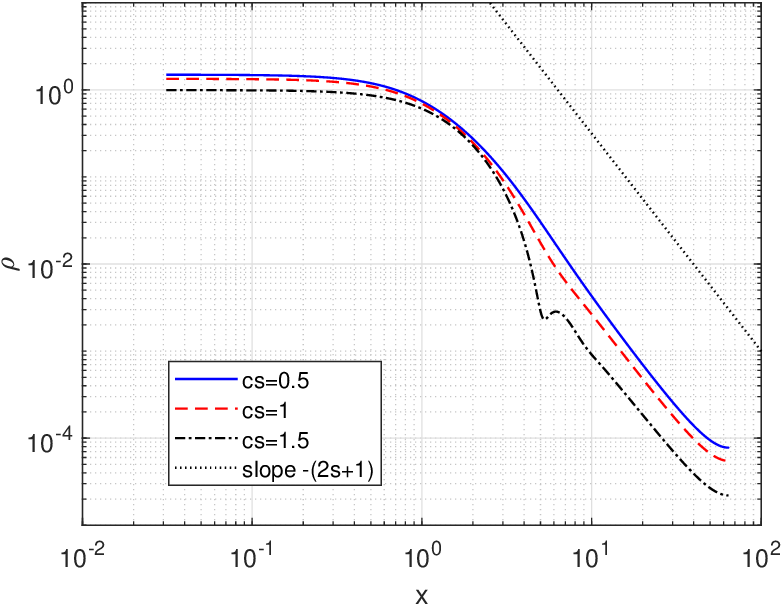}}
\subfigure[]
{\includegraphics[width=6.2cm]{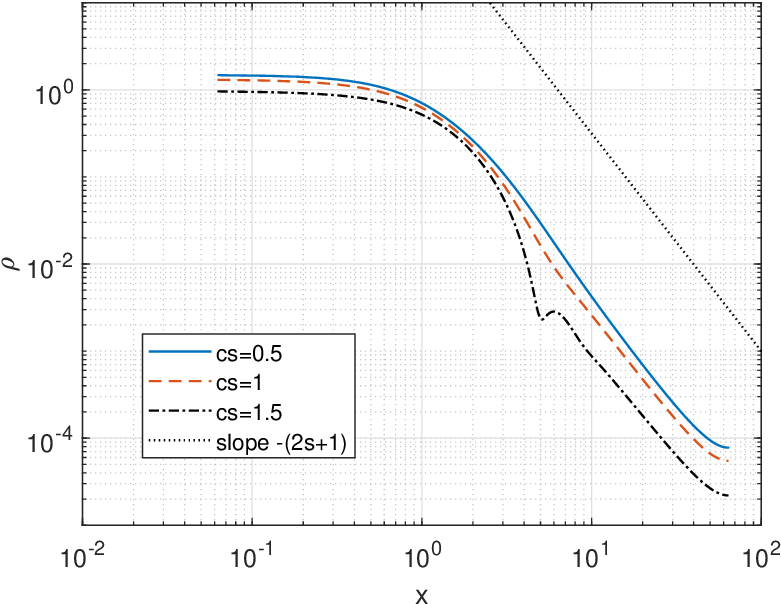}}
\caption{Approximate profiles for $\sigma=1, s=3/4, \lambda_{0}^{1}=1$ in log-log scale.  (a) Solution $\rho$ of (\ref{fnls_312}) with initial iteration (\ref{fnls_331}) and $\theta(x)=Ax$ (cf. Figure \ref{NR_ADfig1}(a)); (b) $\rho=\sqrt{v^{2}+w^{2}}$ with $(v,w)$ solution of (\ref{fnls_311}) with initial iteration (\ref{fnls_331}) and $\theta(x)=x^{2}$ (cf. Figure \ref{NR_ADfig4}(a)). }
\label{NR_ADfig5}
\end{figure}

We now complete the study of the asymptotic decay of the waves with the following experiments. Recall that in the case of (\ref{fnls22_5}), Theorem \ref{theor27} proves that the solutions $\rho(x)$ decay as $1/|x|^{2s+1}, |x|\rightarrow\infty$, cf. (\ref{fnls_2328}). This is checked in Figure \ref{NR_ADfig5}(a). This shows, in log-log scale, the numerical profiles corresponding to panel (a) of Figure \ref{NR_ADfig1}. We considered several values of the length of the interval ($l=64$ in the figure) and compared the slopes of the resulting lines with that of the dashed line, confirming a decay like $1/|x|^{2s+1}$. (Other experiments, not shown here, were made with other values of $s$.) Figure \ref{NR_ADfig5}(b) corresponds to the same experiment but for the case of the iteration (\ref{fnls_311}) with initial iteration (\ref{fnls_331}) and $\theta(x)=x^{2}$ (cf. Figure \ref{NR_ADfig4}(a)). The results suggest that Theorem \ref{theor27} might be extended to the solutions of (\ref{fnls22_2}).
\begin{figure}[htbp]
\centering
\centering
\subfigure[]
{\includegraphics[width=6.2cm]{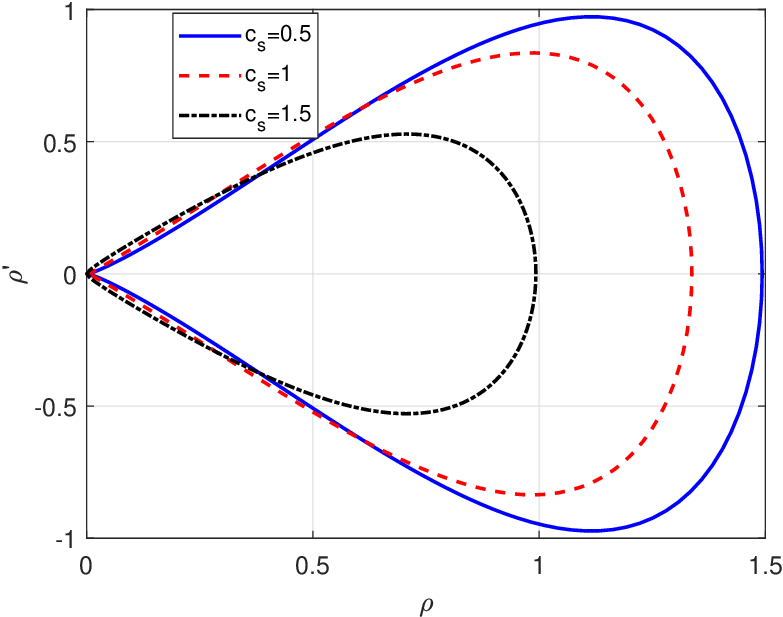}}
\subfigure[]
{\includegraphics[width=6.2cm]{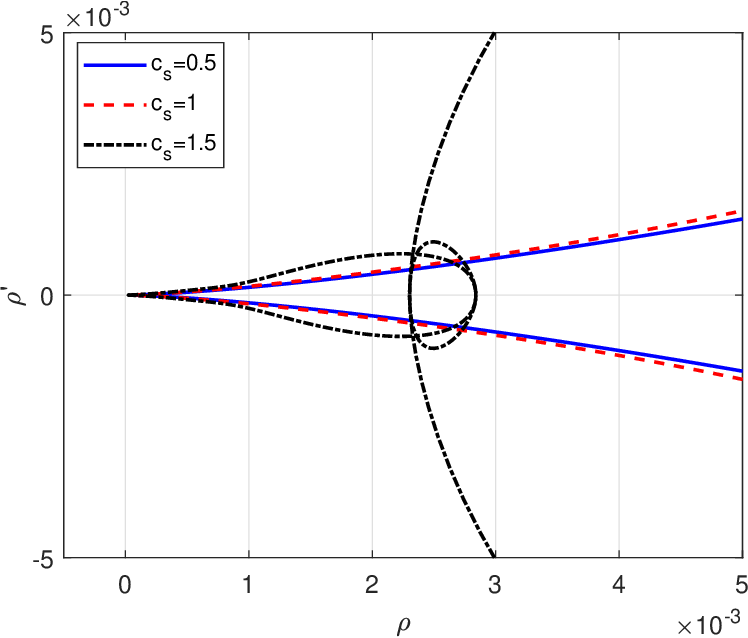}}
\subfigure[]
{\includegraphics[width=6.2cm]{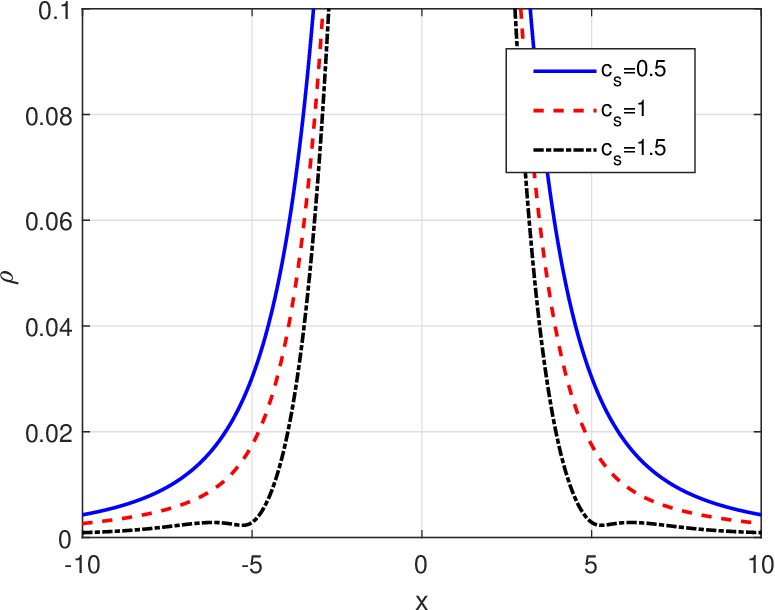}}
\caption{Approximate profiles for $\sigma=1, s=3/4, \lambda_{0}^{1}=1$. (a) Phase plot of the profiles in Figure \ref{NR_ADfig1}(a)); (b) magnification of (a); (c) magnification of Figure \ref{NR_ADfig1}(a).}
\label{NR_ADfig6}
\end{figure}

Some additional information suggested by Figure \ref{NR_ADfig5} is concerned with the behaviour of the decay of the waves. Figure \ref{NR_ADfig6}(a) shows the phase plot of the profiles displayed in Figure \ref{NR_ADfig1}. Note first that the way how the waves, as homoclinic orbits, approach the origin at infinity confirms the algebraic decay. On the other hand, the magnifications shown in Figures \ref{NR_ADfig6}(b) and \ref{NR_ADfig6}(c) suggest that the decay is not monotone, and the waves with speed close to the limiting value start to develop symmetric oscillations, breaking the decreasing behaviour. This phenomenon was also observed in experiments for the general case (\ref{fnls_311}).

\begin{figure}[htbp]
\centering
\centering
\subfigure[]
{\includegraphics[width=6.2cm]{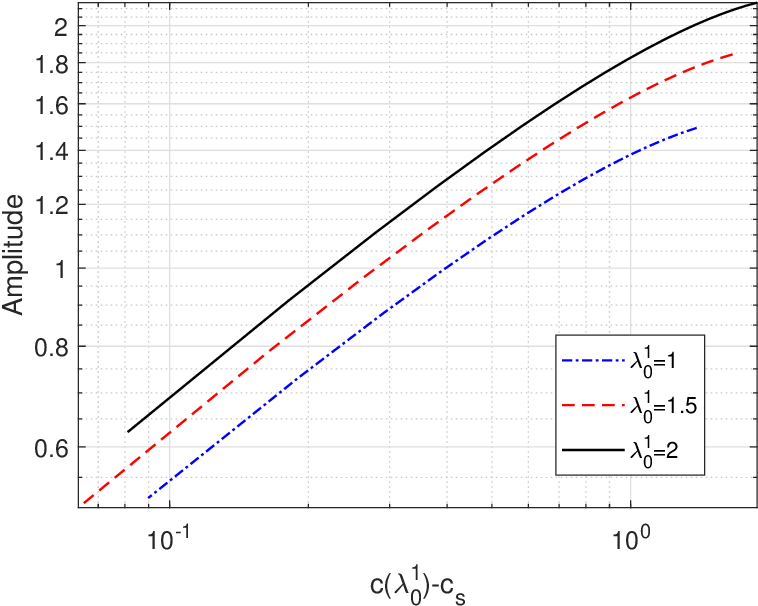}}
\subfigure[]
{\includegraphics[width=6.2cm]{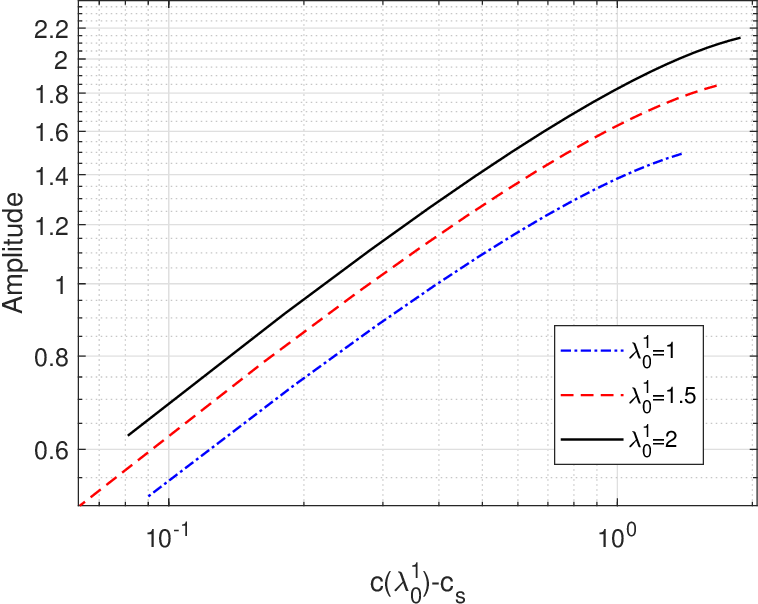}}
\caption{Speed-amplitude relation. (a) Solution $\rho$ of (\ref{fnls_312}) with initial iteration (\ref{fnls_331}) and $\theta(x)=Ax$ (cf. Figure \ref{NR_ADfig1}(a)); (b) $\rho=\sqrt{v^{2}+w^{2}}$ with $(v,w)$ solution of (\ref{fnls_311}) with initial iteration (\ref{fnls_331}) and $\theta(x)=x^{2}$.}
\label{NR_ADfig7}
\end{figure}

A final property studied here concerns the speed-amplitude relation. This is illustrated in Figure \ref{NR_ADfig7}(a), (b). In the first case, for several values of the limiting value of the speed $c(\lambda_{0}^{1})$ in (\ref{fnls_236}), the amplitude of several profiles with speeds $c_{s}$ satisfying (\ref{fnls_236b}) is computed. This leads to a nonlinear increasing relation between the amplitude and the difference $c(\lambda_{0}^{1})-c_{s}$. A similar behaviour is observed in the case of the iteration (\ref{fnls_311}), as suggested by  Figure \ref{NR_ADfig7}(b). This was already observed in Figure \ref{NR_ADfig5}.




\section{Concluding remarks}
\label{sec4}
The present paper is concerned with the existence and properties of solitary-wave solutions of the fractional nonlinear Schr\"{o}dinger equation (\ref{fnls1}). Following the approach developed in \cite{DuranS2000} for the classical NLS and its soliton solutions, here we look for solitary wave solutions as equilibria of the Hamiltonian relative to fixed values of two additional conserved quantities, related to the symmetry group of translations and phase rotations. The existence of solution of the relative equilibrium condition, written as a real, nonlocal, differential system (\ref{fnls22_2}) is established from the application of the Concentration-Compactness theory of \cite{Lions}. The existence is obtained for a range of speeds below a limiting value (speed of sound) which depends on the fractional parameter $s\in (1/2,1]$ and the parameter determining the nonlinearity $\sigma>0$. The regularity of the waves is derived as corollary and the asymptotic decay at infinity, as $1/|x|^{2s+1}$, is proved for a particular subfamily of complex profiles with linear phase function, whose existence was previously derived in \cite{HongS2017}.

Other properties of the waves are studied computationally. To this end, an iterative procedure for the approximation of the profiles, based on the Petviashvili iteration, \cite{Petv1976}, with extrapolation is introduced and its good performance is checked with several numerical experiments. The accuracy of the computed profiles is used to make a second group of experiments with the aim at investigating additional features of the waves. The main conclusions are the following:
\begin{itemize}
\item The solitary-wave profiles are not even in general; only in the particular case of the subfamily derived in \cite{HongS2017} the experiments suggest an even character and uniqueness under translations and rotations.
\item The asymptotic decay of the waves seems to be, in general, like $1/|x|^{2s+1}$ as $|x|\rightarrow\infty$, extending the result proved for the subfamily introduced in \cite{HongS2017}, cf. (\ref{fnls_2328}).
\item The waves do not decay to zero at infinity in a monotone way; the profiles may develop small oscillations which break the decreasing behaviour.
\item The amplitude of the waves is a nonlinear, increading function of the speed, relative to the corresponding speed of sound.
\end{itemize}
The results, both theoretical and computational, obtained in the present work will be especially helpful in a companion paper, \cite{NRAD2}, which will examine the numerical analysis of (\ref{fnls1}) and the dynamics of the solitary-wave solutions.

\section*{Acknowledgments}
A. D. is supported by the Junta de Castilla y Le\'on and FEDER funds (EU) under Research Grant VA193P20.

\bibliographystyle{plain}

\bigskip

\end{document}